\newcommand{\mc}{\mathcal}
\newcommand{\Honezero}{H^1_0(\Omega)}
\newcommand{\Hinv}{H^{-1}}
\newtheorem{theorem}{Theorem}[section]
\newtheorem{lemma}[theorem]{Lemma}
\newtheorem{remark}[theorem]{Remark}
\newtheorem{corollary}[theorem]{Corollary}
\begin{document}
\begin{center}
\noindent{\bf\Large A new formulation for the numerical proof of the existence of solutions to elliptic problems
}\vspace{10pt}\\
 {\normalsize Kouta Sekine$^{1,*}$, Mitsuhiro T. Nakao$^{2}$, Shin'ichi Oishi$^{3}$}\vspace{5pt}\\
 {\it\normalsize $^{1}$Faculty of Information Networking for Innovation and Design, Toyo University, 1-7-11 Akabanedai, kita-ku, Tokyo 115-0053, Japan.\\
 $^{2}$Faculty of Science and Engineering, Waseda University, 3-4-1 Okubo, Tokyo 169-8555, Japan\\
 $^{3}$Department of Applied Mathematics, Faculty of Science and Engineering, Waseda University, 3-4-1 Okubo, Tokyo 169-8555, Japan}
\end{center}
{\bf Abstract}. 
Infinite-dimensional Newton methods can be effectively used to derive numerical proofs of the existence of solutions to partial differential equations (PDEs).
In computer-assisted proofs of PDEs, the original problem is transformed into the infinite Newton-type fixed point equation $w = - {\mathcal L}^{-1} {\mathcal F}(\hat{u}) + {\mathcal L}^{-1} {\mathcal G}(w)$, where ${\mathcal L}$ is a linearized operator, ${\mathcal F}(\hat{u})$ is a residual, and ${\mathcal G}(w)$ is a local Lipschitz term.
Therefore, the estimations of $\| {\mathcal L}^{-1} {\mathcal F}(\hat{u}) \|$ and $\| {\mathcal L}^{-1}{\mathcal G}(w) \|$ play major roles in the verification procedures.  

In this paper, using  a similar concept as the `Schur complement' for matrix problems, we represent the inverse operator ${\mathcal L}^{-1}$ as an infinite-dimensional operator matrix  that can be decomposed into two parts, one finite dimensional and one infinite dimensional.
This operator matrix yields a new effective realization of the infinite-dimensional Newton method, enabling a more efficient verification procedure compared with existing methods for the solution of elliptic PDEs.
We present some numerical examples that confirm the usefulness of the proposed method.
Related results obtained from the representation of the operator matrix as ${\mathcal L}^{-1}$ are presented in the appendix.
\renewcommand{\thefootnote}{\fnsymbol{footnote}}
\footnote[0]{{\it E-mail address:} $^{*}$\texttt{ k.sekine@computation.jp}\\[-3pt]}
\renewcommand\thefootnote{*\arabic{footnote}}

\section{Introduction}
In this paper, we study a new approach to proving the existence of solutions for elliptic problems. The proposed approach offers an improvement over existing numerical verification methods. 
We consider computer-assisted existence proofs for the nonlinear elliptic boundary value problem
\begin{align}
	\label{main_problem}
	\left\{
		\begin{array}{ll}
		-\Delta u = f( u ) & {\rm in~~} \Omega,\\
		u=0 & {\rm on~~} \partial\Omega,
		\end{array}
	\right.
\end{align}
where $\Omega \subset {\mathbb R}^n (n = 1,2,3)$ is a bounded domain with a Lipschitz boundary, and $f:H^1_0(\Omega) \rightarrow H^{-1}(\Omega)$ is a given nonlinear function.
Equation (\ref{main_problem}) is a basic case of a semi-linear elliptic partial differential equation (PDE), for which many computer-assisted proof methods have been developed \cite{nakao1988numerical, nakao1990numerical, plum1991bounds, plum1994enclosures, oishi1995numerical, nakao2001numerical, nakao2004efficient, nakao2005numerical, plum2008, plum2009computer, nakao2011numerical}.
These methods are intended to prove the existence of solutions based on the fixed point theorem in  the Sobolev spaces.
Throughout this paper, denoting the first-order $ L^2 $ Sobolev space by $ H^1(\Omega) $, we define $ H^1_0(\Omega) := \{ u \in H^1(\Omega) : u = 0~\mbox{on} ~ \partial \Omega\} $ with the inner product $(u, w)_{H^1_0}:=(\nabla u, \nabla w)_{L^2}$, 
and $ \Hinv $ denotes the topological dual of $ \Honezero$.
We now categorize the verification methods developed so far to clarify the significance and advantages of the method described in this paper:
\begin{itemize}
  \item FN method: Applying the Newton method only for the finite-dimensional part (e.g., FN-Int \cite{nakao1990numerical, nakao2001numerical, nakao2011numerical}, FN-Norm \cite{nakao2004efficient, nakao2011numerical}).
  \item IN method: Using the infinite-dimensional Newton's method (e.g., Newton--Kantorovich-like theorem \cite{plum1994enclosures, plum2008, plum2009computer}, IN-Linz \cite{nakao2005numerical, nakao2011numerical, oishi1995numerical}, Newton--Kantorovich theorem \cite{takayasu2013verified}).
\end{itemize}

In the FN method, with an appropriate setting of the finite-dimensional subspace $V_h \subset H^1_0(\Omega)$, we first consider the Ritz projection $R_h : H^1_0(\Omega) \rightarrow V_h$ defined by 
\begin{align*}
	((I - R_h) u, v_h)_{H^1_0} = 0 \quad \forall v_h \in V_h
\end{align*}
for $u \in H^1_0(\Omega)$. Using this projection, the problem is decomposed into two parts, one finite dimensional and the other infinite dimensional. Let $\psi_1, \! \cdots, \! \psi_N $ be a basis of $V_h$, and let
$V_\bot := \{ u \in H^1_0(\Omega) ~ | ~ (u, v_h)_{H^1_0(\Omega)} = 0, ~ v_h \in V_h  \}$ be an orthogonal complement of $V_h$. For a given approximate solution $\hat{u} \in V_h$, setting $w := u - \hat{u}$, $w_h := R_h w$, and $w_\bot := (I - R_h)w$, the FN method uses the following fixed point formulation:\\
find $w_h \in V_h, ~ w_\bot \in V_\bot$ satisfying
\begin{align}
	\label{eq:FN_Method}
	\left\{
		\begin{array}{l}
				w_h = R_h {\mathcal A}^{-1} (f (w_h + w_\bot + \hat{u}) - {\mathcal A} \hat{u}), \\
				w_\bot = (I - R_h) {\mathcal A}^{-1} (f (w_h + w_\bot + \hat{u}) - {\mathcal A} \hat{u}),
		\end{array}
	\right.
\end{align}
where ${\mc A}: H^1_0(\Omega) \rightarrow H^{-1}(\Omega)$ denotes the weak Laplace operator.  In \cite{nakao1988numerical}, the candidate set of solutions is set to 
\begin{align}
	\label{CandidateSet_wh}
		W_h &:= \left\{ \sum_{i = 1}^{N} W_i \psi_i \subset V_h ~ | ~ W_i ~ \mbox{is a closed interval in } {\mathbb R} ~   \right\}, \\
	\label{CandidateSet_wbot}
		W_\bot &:= \{ w_{\bot} \in V_{\bot} ~ | ~ \| w_{\bot} \|_{H^1_0}, \le \alpha \},
\end{align}
 and the fixed point theorem is applied to (\ref{eq:FN_Method}) in \cite{nakao1988numerical} to verify a solution in the set $W= W_h + W_\bot$.  To confirm the verification condition in the fixed point theorem, the verified computation of solutions for a linear system of equations and the constructive a priori error estimates for the Ritz projection play an essential role. 
This was the first approach to numerical  verification, but is based on a sequential iteration method that differs from the FN method.

The FN method applies Newton's method to the finite-dimensional part of (\ref{eq:FN_Method}) as follows:\\
Let $f'[\hat{u}]$ be the Fr\'echet derivative at $\hat{u}$ of the nonlinear term $f(u)$, and let ${\mathcal L}: H^1_0(\Omega) \rightarrow H^{-1}(\Omega)$ be a linear operator defined as
\begin{align}
	\label{def:linearize_operator}
	{\mathcal L} := {\mathcal A} - f'[\hat{u}].
\end{align}
Furthermore, the finite-dimensional operator $T : V_h \rightarrow V_h$ is defined as 
\begin{align}
	\label{def:T}
	T := R_h {\mathcal A}^{-1} {\mathcal L}|_{V_h},
\end{align}
and $T$ is assumed to be nonsingular.
Then, we can rewrite the finite-dimensional part of (\ref{eq:FN_Method}) as 
\begin{align}
	\label{eq:FN_Method_finite}
		w_h = T^{-1} R_h {\mathcal A}^{-1} (f (w_h + w_\bot + \hat{u}) - {\mathcal A} \hat{u} - f'[\hat{u}]w_h).
\end{align}

This FN method was developed in \cite{nakao1990numerical}, and it has been confirmed that Newton's method is actually more effective for the verification of $w_h$ than the method in \cite{nakao1988numerical}.
Additionally, by using the verified matrix computations in (\ref{eq:FN_Method_finite}), the calculation cost is not unreasonable. 
However, Newton's method cannot be expected to be effective for the infinite dimensional $w_{\bot}$. This causes difficulties in the verification process. For example, if first derivatives such as $(b \cdot \nabla) u $ are included in $f(u)$, the verification becomes inefficient and often fails. Namely, as Newton's method is ineffective on the primary term of the infinite-dimensional part $w_\bot$, there is an explosive expansion of the iteration and verification fails. In \cite{nakao2004efficient}, the FN-Norm method was proposed. This ensures more effective verification by setting the candidate set $ W_h $ of the finite-dimensional part as 
\begin{align*}
	W_h &:= \left\{ w_h \in V_h ~ | ~ \| w_h \|_{H^1_0} \le \gamma ~   \right\}.
\end{align*}
However, the problem is not essentially resolved,  because the first-order term of the infinite-dimensional part $w_\bot$ remains, and this reduces the effect of Newton's method.

In contrast, the IN method assumes that the linearized operator ${\mathcal L}$ is nonsingular and considers the following fixed point equation:\\
 find $w \in H^1_0(\Omega)$ such that
\begin{align}
	\label{eq:IN_Method}
		w = - {\mathcal L}^{-1} {\mathcal F}(\hat{u}) + {\mathcal L}^{-1} {\mathcal G}(w),
\end{align}
where 
\begin{align}
	\label{def:Residual}
{\mathcal F}(\hat{u}) := {\mathcal A}\hat{u} - f(\hat{u})
\end{align}
and
\begin{align}
	\label{def:Lipsitz}
{\mathcal G}(w) :=  f'[\hat{u}] w + f(\hat{u}) - f (w + \hat{u}).
\end{align}
As (\ref{eq:IN_Method}) is a Newton-type formulation in the infinite-dimensional sense, the linear term with respect to $w$, which is a shortcoming of the FN method, is no longer present in the right-hand side of the equation.
Therefore, a Newton--Kantorovich-like theorem can be derived using the fixed point equation (\ref{eq:IN_Method}).
However, as the inverse operator ${\mathcal L}^{-1}$ cannot be calculated directly, it is necessary, as a decomposition from the verification for nonlinear problems, to show the invertibility of  ${\mathcal L}$ and estimate the operator norm $\| {\mathcal L}^{-1} \|$.
Thus, the evaluation of $|| {\mathcal L}^{- 1} ||$ is the major task in the verification procedures of the IN method,  and has been studied by many researchers since 1991 (e.g., \cite{plum1991bounds, plum1994enclosures, oishi1995numerical, nakao2005numerical, watanabe2013posteriori, tanaka2014verified, nakao2015some, watanabe2016norm,kinoshita2019alternative,watanabe2019improved}).
In general, the IN method defines the candidate set as 
\begin{align}
	\label{CandidateSet_IN}
		W &:= \{ w \in H^1_0(\Omega) ~ | ~ \| w \|_{H^1_0} \le \rho \}.
\end{align}
Therefore, compared with the FN method, the IN method is overestimated for finite-dimensional parts that can be computed directly.

We now describe a new approach that incorporates the advantages of both  the FN  and IN methods. 
In this paper, we basically consider the IN method based on \eqref{eq:IN_Method}, but we propose a verification method {\it without} estimating the norm $\| {\mathcal L}^{-1} \|$.
Through a computational procedure that avoids the direct evaluation of the operator norm $\| {\mathcal L}^{-1} \|$, a highly accurate and efficient verification can be expected.
We decompose  (\ref{eq:IN_Method}) into finite-dimensional and infinite-dimensional parts: namely, $(w_h, w_\bot)^T \in V_h \times V_\bot$ such that
\begin{align}
	\label{eq:main_fixed_point_form}
	\left( 
	\begin{array}{c}
				w_h \\
				w_\bot
	\end{array} 
	\right) = - \left(
	\begin{array}{c}
				R_h {\mathcal L}^{-1} {\mathcal F}(\hat{u})  \\
				(I - R_h) {\mathcal L}^{-1} {\mathcal F}(\hat{u})
	\end{array} \right) + \left(
	\begin{array}{c}
				R_h {\mathcal L}^{-1} {\mathcal G}(w_h + w_\bot) \\
				(I - R_h) {\mathcal L}^{-1} {\mathcal G}(w_h + w_\bot)
	\end{array}
	\right)  . 
\end{align}
However, we cannot directly calculate ${\mathcal L}^{-1}$ for the same reason as in the existing IN method.
To overcome this difficulty, we define the operator matrix
\begin{align}
	\label{def:H1}
		H=\left(\begin{array}{c c}
			H_{11} & H_{12} \\
			H_{21} & H_{22}
		\end{array}
		\right)
		: V_h \times V_\bot \rightarrow V_h \times V_\bot
\end{align}
satisfying 
\begin{align}
	\label{def:H2}
		\left(
		\begin{array}{c}
					R_h {\mathcal L}^{-1} g  \\
					(I - R_h) {\mathcal L}^{-1} g
		\end{array} \right) = \left( \begin{array}{c c}
					H_{11} & H_{12}  \\
					H_{21} & H_{22}
		\end{array}
		\right) \left( \begin{array}{c}
					R_h {\mathcal A}^{-1} g  \\
					(I - R_h) {\mathcal A}^{-1} g
		\end{array} \right), ~ \forall g \in H^{-1}(\Omega).
\end{align}
Here, we apply the fixed point theorem to the following equation:
\begin{align}
	\label{eq:fixed_point_form_with_H}
	\left( \begin{array}{c}
				w_h \\
				w_\bot
	\end{array} 
	\right) = -H \left(
	\begin{array}{c}
				R_h {\mathcal A}^{-1} {\mathcal F}(\hat{u})  \\
				(I - R_h) {\mathcal A}^{-1} {\mathcal F}(\hat{u})
	\end{array} \right) + H \left(
	\begin{array}{c}
				R_h {\mathcal A}^{-1} {\mathcal G}(w_h + w_\bot) \\
				(I - R_h) {\mathcal A}^{-1} {\mathcal G}(w_h + w_\bot)
	\end{array} \right)  
\end{align}
with candidate sets (\ref{CandidateSet_wh}) and (\ref{CandidateSet_wbot}).
Note that the right-hand side of this fixed point equation no longer has linear terms in  $w_h$ and $w_\bot$, and that we can directly calculate the finite-dimensional part. Therefore, the proposed method removes the disadvantages of the FN and IN methods.
For the actual implementation of the verification procedure, it is necessary to obtain a more detailed form of the operator matrix $H$. Thus, we consider a concrete construction of this matrix below.

The remainder of this paper is organized as follows. 
Section 2 describes how to construct the operator matrix $H$.
In Section 3, we present the results of numerical experiments using the operator matrix $H$.
The appendix describes why the proposed method offers an improvement over previous techniques based on some useful results.

\section{Constitution of the inverse operator matrix $H$}
This section presents a detailed description of the actual construction  of the operator matrix $H$ defined by (\ref{def:H1}) and (\ref{def:H2}).
The basic idea comes from the concept of the `Schur complement' for matrices.

We consider a solution $\phi$ that satisfies the linear equation 
\begin{align}
	\label{eq:linear_prog}
		{\mathcal L} \phi = g
\end{align}
for a given $g \in H^{-1}(\Omega)$.
We denote 
\begin{align*}
	\phi_h &:= R_h \phi,& \phi_\bot &:= ( I - R_h) \phi, \\
	{\mathcal A}_h^{-1} &:= R_h {\mathcal A}^{-1}, & {\mathcal A}_{\bot}^{-1} &:= (I - R_h) {\mathcal A}^{-1}.
\end{align*}
We multiply ${\mathcal A}^{-1}$ from the left on both sides of (\ref{eq:linear_prog}), and decompose the result into the finite- and infinite-dimensional parts using the Ritz projection $R_h$ as follows:
\begin{align*}
	& \left\{
		\begin{array}{l}
			R_h {\mathcal A}^{-1}{\mathcal L} (\phi_h + \phi_\bot) = {\mathcal A}_h^{-1}g \\
			(I - R_h ) {\mathcal A}^{-1}{\mathcal L} (\phi_h + \phi_\bot) = {\mathcal A}_{\bot}^{-1}g
		\end{array}
	\right. \\
	\Leftrightarrow & \left\{
		\begin{array}{l}
			T \phi_h - R_h {\mathcal A}^{-1}f'[\hat{u}] \phi_\bot = {\mathcal A}_h^{-1}g \\
			- (I - R_h ) {\mathcal A}^{-1}f'[\hat{u}] \phi_h + (I_{V_\bot} - (I - R_h){\mathcal A}^{-1}f'[\hat{u}] ) \phi_\bot = {\mathcal A}_{\bot}^{-1}g
		\end{array}
	\right. ,
\end{align*}
where $I_{V_{\bot}}$ is an identity operator on $V_{\bot}$.
Furthermore, transforming the above equation using the operator matrix yields
\begin{align}
	\label{eq:linear_prog_block}
	\left(
	\begin{array}{c c}
		T & -{\mathcal A}_h^{-1} f'[\hat{u}] |_{V_\bot} \\
		-{\mathcal A}_{\bot}^{-1} f'[\hat{u}] |_{V_h} & I_{V_{\bot}} - {\mathcal A}_{\bot}^{-1} f'[\hat{u}] |_{V_\bot}
	\end{array}
	\right)
	\left(
	\begin{array}{c}
		\phi_h \\
		\phi_{\bot}
	\end{array}
	\right) = 
	\left(
	\begin{array}{c}
		{\mathcal A}_h^{-1} g \\
		{\mathcal A}_{\bot}^{-1} g
	\end{array}
	\right).
\end{align}
Additionally, we define the $2 \times 2$ block operator matrix $D$ by 
\begin{align}
	\label{def:operator_matrix_D}
	D := \left(
	\begin{array}{c c}
		T & -{\mathcal A}_h^{-1} f'[\hat{u}] |_{V_\bot} \\
		-{\mathcal A}_{\bot}^{-1} f'[\hat{u}] |_{V_h} & I_{V_{\bot}} - {\mathcal A}_{\bot}^{-1} f'[\hat{u}] |_{V_\bot}
	\end{array}
	\right).
\end{align}
Moreover, if the operators ${\mathcal L}$ and $D$ are nonsingular, then we have
\begin{align*}
	\left(
	\begin{array}{c}
		\phi_h \\
		\phi_{\bot}
	\end{array}
	\right) = \left(
	\begin{array}{c}
		R_h {\mathcal L}^{-1} g \\
		(I - R_h){\mathcal L}^{-1} g
	\end{array}
	\right) = D^{-1} \left(
	\begin{array}{c}
		{\mathcal A}_h^{-1} g \\
		{\mathcal A}_{\bot}^{-1} g
	\end{array}
	\right)
\end{align*}
and $H$ is equal to $D^{-1}$ from (\ref{def:H2}).

We first present a sufficient condition for the nonsingularity of operators $D$ and ${\mathcal L}$, which also gives a detailed expression of $H (= D^{-1})$.

\begin{theorem}\label{thm:inverse_theorem}
The finite-dimensional operator $T : V_h \rightarrow V_h$ defined as (\ref{def:T}) is assumed to be nonsingular.
Let $S: V_{\bot} \rightarrow V_{\bot}$ be a linear operator defined as
\begin{align}
	\label{def:S}
		S := I_{V_\bot} - {\mathcal A}_{\bot}^{-1} f'[\hat{u}] |_{V_\bot} - {\mathcal A}_{\bot}^{-1} f'[\hat{u}] |_{V_h} T^{-1} {\mathcal A}_h^{-1} f'[\hat{u}] |_{V_\bot}.
\end{align}
If $S$ is nonsingular, then the operators $D$ and  ${\mathcal L}$, defined by (\ref{def:operator_matrix_D}) and (\ref{def:linearize_operator}), respectively, are also nonsingular and the solution $(\phi_h, \phi_{\bot})^T \in V_h \times V_{\bot}$ that satisfies (\ref{eq:linear_prog_block}) is represented as 
{\small
\begin{align*}
	& \left(
	\begin{array}{c}
		\phi_h \\
		\phi_{\bot}
	\end{array}
	\right) \\ 
	= & \left(
	\begin{array}{c c}
		T^{-1} + T^{-1}{\mathcal A}^{-1}_h f'[\hat{u}]|_{V_{\bot}} S^{-1} {\mathcal A}^{-1}_{\bot} f'[\hat{u}]|_{V_h}T^{-1} ~~ & ~ T^{-1}{\mathcal A}^{-1}_h f'[\hat{u}]|_{V_{\bot}}S^{-1} \\
		S^{-1}  {\mathcal A}^{-1}_{\bot} f'[\hat{u}]|_{V_h}T^{-1} & S^{-1}
	\end{array}
	\right)
	\left(
	\begin{array}{c}
		{\mathcal A}_h^{-1} g \\
		{\mathcal A}_{\bot}^{-1} g
	\end{array}
	\right).
\end{align*}
}
\end{theorem}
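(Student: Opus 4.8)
\emph{Proof sketch.} The plan is to treat $D$ from (\ref{def:operator_matrix_D}) as an ordinary $2\times 2$ block matrix and to use its Schur-complement factorization relative to the nonsingular $(1,1)$-block $T$. Write $B := -{\mathcal A}_h^{-1} f'[\hat{u}]|_{V_\bot}$, $C := -{\mathcal A}_\bot^{-1} f'[\hat{u}]|_{V_h}$, and $E := I_{V_\bot} - {\mathcal A}_\bot^{-1} f'[\hat{u}]|_{V_\bot}$ for the three non-$T$ blocks of $D$; each of $B : V_\bot \to V_h$, $C : V_h \to V_\bot$, $E : V_\bot \to V_\bot$ is bounded, since $f'[\hat{u}] : \Honezero \to \Hinv$ and ${\mathcal A}^{-1} : \Hinv \to \Honezero$ are bounded and $R_h$, $I - R_h$ are bounded projections, while $T^{-1}$ is bounded because $V_h$ is finite dimensional. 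Set $S := E - C T^{-1} B$, which coincides with (\ref{def:S}) once the two minus signs in $B$ and $C$ are carried through, so that $S : V_\bot \to V_\bot$ is again a bounded operator. A direct block multiplication then verifies the factorization
\begin{align*}
	D = \begin{pmatrix} I_{V_h} & 0 \\ C T^{-1} & I_{V_\bot} \end{pmatrix} \begin{pmatrix} T & 0 \\ 0 & S \end{pmatrix} \begin{pmatrix} I_{V_h} & T^{-1} B \\ 0 & I_{V_\bot} \end{pmatrix}.
\end{align*}

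The two triangular factors are invertible for free, their inverses being obtained by negating the single off-diagonal block; hence $D$ is nonsingular as soon as the middle factor is, i.e. as soon as both $T$ and $S$ are nonsingular — which is exactly the hypothesis. Inverting the factorization gives
\begin{align*}
	D^{-1} = \begin{pmatrix} I_{V_h} & -T^{-1} B \\ 0 & I_{V_\bot} \end{pmatrix} \begin{pmatrix} T^{-1} & 0 \\ 0 & S^{-1} \end{pmatrix} \begin{pmatrix} I_{V_h} & 0 \\ -C T^{-1} & I_{V_\bot} \end{pmatrix},
\end{align*}
and multiplying this out and substituting back $B = -{\mathcal A}_h^{-1} f'[\hat{u}]|_{V_\bot}$ and $C = -{\mathcal A}_\bot^{-1} f'[\hat{u}]|_{V_h}$ — so that the two minus signs cancel in each off-diagonal entry and in the $(1,1)$-correction term — reproduces precisely the matrix displayed in the theorem. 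Applying it to $({\mathcal A}_h^{-1} g, {\mathcal A}_\bot^{-1} g)^T$ and recalling from (\ref{eq:linear_prog_block}) that $(\phi_h,\phi_\bot)^T$ solves that block system yields the asserted representation of $(\phi_h, \phi_\bot)^T$.

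It remains to pass from the invertibility of $D$ to that of ${\mathcal L} : \Honezero \to \Hinv$ itself. Here I would use that ${\mathcal A}$ is an isomorphism of $\Honezero$ onto $\Hinv$ (Lax--Milgram, via the Poincar\'e inequality), so that multiplying ${\mathcal L}\phi = g$ on the left by ${\mathcal A}^{-1}$ is a reversible step, together with $\Honezero = V_h \oplus V_\bot$ through the Ritz projection; these two facts make ${\mathcal L}\phi = g$ equivalent to the block system (\ref{eq:linear_prog_block}) for $(\phi_h,\phi_\bot) = (R_h\phi,(I-R_h)\phi)$. For every $g \in \Hinv$ that block system has the unique solution $D^{-1}({\mathcal A}_h^{-1} g,{\mathcal A}_\bot^{-1} g)^T$, so $\phi := \phi_h + \phi_\bot$ is the unique solution of ${\mathcal L}\phi = g$, and the map $g \mapsto \phi$, being a composition of bounded operators, is a bounded inverse of ${\mathcal L}$; hence ${\mathcal L}$ is nonsingular and $H = D^{-1}$ through (\ref{def:H2}).

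The computation is essentially routine linear algebra, so there is no deep obstacle; the two places that need care are verifying that the reduction of ${\mathcal L}\phi = g$ to the block system (\ref{eq:linear_prog_block}) is a genuine two-way equivalence — which is where the isomorphism property of ${\mathcal A}$ and the direct-sum splitting of $\Honezero$ enter — and the bookkeeping of sign conventions when translating between the compact blocks $B$, $C$, $S$ and the operators $\pm{\mathcal A}_\bullet^{-1} f'[\hat{u}]|_{V_\bullet}$ appearing in (\ref{def:operator_matrix_D}) and (\ref{def:S}).
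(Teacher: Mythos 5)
Your proposal is correct and follows essentially the same route as the paper: both exploit the block Schur-complement structure of $D$ to get an explicit inverse, and both lift nonsingularity from $D$ to ${\mathcal L}$ via the bijectivity of ${\mathcal A}$ together with the orthogonal splitting $\Honezero = V_h \oplus V_\bot$ induced by the Ritz projection. The only cosmetic difference is that you derive $D^{-1}$ from the block $LDU$ factorization and then multiply the three triangular/diagonal inverses out, whereas the paper writes the candidate matrix $\bar D$ directly and verifies $\bar D D = D\bar D = I$ by brute-force block multiplication; these are the same Schur-complement computation in two equivalent guises.
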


\begin{proof}. 
We first show that the operator matrix $D$ is nonsingular.
Let $Y: V_{\bot} \rightarrow V_h$, $Z: V_h \rightarrow V_{\bot}$, and $G: V_{\bot} \rightarrow V_{\bot}$ be bounded linear operators defined as
\begin{align}
\label{def:YZG}
Y := - {\mathcal A}^{-1}_h f'[\hat{u}] |_{V_{\bot}}, ~ Z := - {\mathcal A}^{-1}_{\bot} f'[\hat{u}] |_{V_h} \mbox{and}\;G := I - {\mathcal A}^{-1}_{\bot} f'[\hat{u}] |_{V_{\bot}},
\end{align}
respectively.
We rewrite the operator matrix $D$ and the bounded operator $S$ as follows: 
\begin{align*}
D = \left( \begin{array}{cc}
T & Y \\
Z & G
\end{array} \right) ~ \mbox{and} ~ S = G - ZT^{-1}Y.
\end{align*}
We note that the operator $S$ is corresponding to the Schur complement of $T$ of the matrix operator $D$.

As $T$ and $S$ are nonsingular, we can define the operator matrix $\bar{D}$ as
\begin{align*}
\bar{D} := \left( \begin{array}{cc}
T^{-1} + T^{-1} Y S^{-1} Z T^{-1}  ~~ & ~ -T^{-1} Y S^{-1}  \\
-S^{-1} Z T^{-1} & S^{-1}
\end{array} \right).
\end{align*}
If $\bar{D} D$ and $D \bar{D}$ become identity operator matrices, then this implies that  $D$ is nonsingular and its inverse matrix coincides with $\bar{D}$. Observe that
\begin{eqnarray*}
\bar{D}D &=& \left( \begin{array}{cc}
T^{-1} + T^{-1} Y S^{-1} Z T^{-1}   ~&~ -T^{-1} Y S^{-1}  \\
-S^{-1} Z T^{-1} & S^{-1}
\end{array} \right) \left( \begin{array}{cc}
T & Y \\
Z & G
\end{array} \right) \\
&=& \left( \begin{array}{cc}
I_{V_h} ~&~ T^{-1}Y + T^{-1} Y S^{-1} Z T^{-1}Y - T^{-1} Y S^{-1}G   \\
0 & -S^{-1} Z T^{-1}Y + S^{-1} G
\end{array} \right) \\
&=& \left( \begin{array}{cc}
I_{V_h} ~&~ T^{-1}Y - T^{-1} Y S^{-1} ( G - Z T^{-1}Y )   \\
0 & S^{-1} ( G - Z T^{-1}Y )
\end{array} \right) \\
&=& \left( \begin{array}{cc}
I_{V_h} ~&~ T^{-1}Y - T^{-1} Y S^{-1} S   \\
0 & S^{-1} S
\end{array} \right) \\
&=& \left( \begin{array}{cc}
I_{V_h} & 0   \\
0 & I_{V_{\bot}}
\end{array} \right),
\end{eqnarray*}
where $I_{V_h}: V_h \rightarrow V_h$ denotes an identity operator on $V_h$.  \\
Then, we have
\begin{eqnarray*}
D\bar{D} &=& \left( \begin{array}{cc}
T & Y \\
Z & G
\end{array} \right) \left( \begin{array}{cc}
T^{-1} + T^{-1} Y S^{-1} Z T^{-1}   ~ & ~ -T^{-1} Y S^{-1}  \\
-S^{-1} Z T^{-1} & S^{-1}
\end{array} \right) \\
&=& \left( \begin{array}{cc}
I_{V_h} & 0 \\
Z T^{-1} + Z T^{-1} Y S^{-1} Z T^{-1} - G S^{-1} Z T^{-1} ~~ &  -ZT^{-1} Y S^{-1}  + G S^{-1}
\end{array} \right) \\
&=& \left( \begin{array}{cc}
I_{V_h} & 0 \\
Z T^{-1} - ( G - Z T^{-1} Y ) S^{-1} Z T^{-1} ~~ & ~ ( G -ZT^{-1} Y ) S^{-1}
\end{array} \right) \\
&=& \left( \begin{array}{cc}
I_{V_h} & 0 \\
Z T^{-1} - S S^{-1} Z T^{-1} ~~ & ~ S S^{-1} \\
\end{array} \right) \\
&=& \left( \begin{array}{cc}
I_{V_h} & 0 \\
0 & I_{V_{\bot}}
\end{array} \right).
\end{eqnarray*}
Therefore, $\bar{D}$ is the inverse operator of $D$ and is equal to $H$. Multiplying this from the left of (\ref{eq:linear_prog_block}), we obtain
\begin{align}
	\label{eq:explicit_form}
	& \left(
	\begin{array}{c}
		\phi_h \\
		\phi_{\bot}
	\end{array}
	\right)
	= 
	\bar{D}
	\left(
	\begin{array}{c}
		{\mathcal A}_h^{-1} g \\
		{\mathcal A}_{\bot}^{-1} g
	\end{array}
	\right)
	= 
	H
	\left(
	\begin{array}{c}
		{\mathcal A}_h^{-1} g \\
		{\mathcal A}_{\bot}^{-1} g
	\end{array}
	\right).
\end{align}
Note that the above arguments provide a detailed expression of the operator matrix $H$ in the theorem.  

We next show that the linearized operator ${\mathcal L}$ is nonsingular.
To prove that ${\mathcal L}$ is injective, we show that $\phi = 0$ is the only solution to the equation ${\mathcal L} \phi = 0$. This can be readily seen from the fact that, using the nonsingularity of the operator matrix $D$ and the bijectivity of ${\mathcal A}$, the solution  for (\ref{eq:linear_prog_block}) with $g = 0$ implies that $\phi = 0$. 

Finally, we prove that the linearized operator ${\mathcal L}$ is surjective.
For this, it is sufficient to show that there exists a solution $\phi \in H^1_0(\Omega)$ satisfying ${\mathcal L} \phi = g$ for any $g \in H^{-1}(\Omega)$. Now, for $g \in H^{-1}(\Omega)$, define $( \phi_h,  \phi_{\bot})^T$ as the left-hand side of (\ref{eq:explicit_form}) and set $\phi  := \phi_h + \phi_{\bot}$.
Then, by the nonsingularity of the operator matrix $H$, the function $\phi$ satisfies  (\ref{eq:linear_prog_block}), and therefore it also  implies \eqref{eq:linear_prog}, which proves the desired surjectivity. Thus, the operator ${\mathcal L}$ is nonsingular.

\qed
\end{proof}



Theorem \ref{thm:inverse_theorem} provides a concrete expression of the operator matrix $H$ satisfying (\ref{def:H1}).
Moreover, Theorem \ref{thm:inverse_theorem} is a new expression for the solution $\phi$ of  the linear noncoercive elliptic PDE ${\mathcal L} \phi = g$.
Therefore, for example, the exact expression of the Ritz projection error for the solution of the linear noncoercive elliptic PDE ${\mathcal L} \phi = g$ is also derived from Theorem \ref{thm:inverse_theorem} (see \ref{sec:Ritz_projection_error} for details).

At the end of this section, we describe how to verify that $T$ and $S$ in the assumptions of Theorem \ref{thm:inverse_theorem} are nonsingular operators.
Let ${\vec G} \in {\mathbb R}^{N \times N}$ be a real matrix defined as $( {\vec G} )_{i, j} := ( \nabla \psi_j, \nabla \psi_i)_{L^2} - (f'[\hat{u}] \psi_j, \psi_i)_{L^2}$.
If the matrix ${\vec G}$ is invertible, then $T$ is a nonsingular operator.
Therefore, by confirming the regularity of the matrix ${\vec G}$ using numerical computation with guaranteed accuracy, we can easily check the regularity of $T$.

To confirm the regularity of the operator $S$, the following operator norm is used.
For two Banach spaces $X$ and $Y$, the set of bounded linear operators from $X$ to $Y$ is denoted by $L(X, Y)$ with the usual sup norm $\| T \|_{ L(X, Y) } := \sup\{ \| T u \|_{Y} / \| u \|_{X} : {u \in X \setminus \{0\}}\}  $ for $T \in L(X, Y)$.
When $X = Y$, we simply use $L(X)$.
Then, we can confirm the regularity of $S$ using the following well-known theorem.

\begin{lemma}[Well known]\label{Lem:Neumann}
Let $S$ be a bounded linear operator satisfying (\ref{def:S}).
Setting $\kappa := \|  {\mathcal A}_{\bot}^{-1} f'[\hat{u}] |_{V_\bot} + {\mathcal A}_{\bot}^{-1} f'[\hat{u}] |_{V_h} T^{-1} {\mathcal A}_h^{-1} f'[\hat{u}] |_{V_\bot} \|_{L(H^1_0)}$, if $\kappa < 1$ holds, then $S$ is invertible and its norm satisfies
\begin{align*}
\| S^{-1} \|_{ L(H^1_0)} \le \frac{1}{ 1 - \kappa }.
\end{align*}
\end{lemma}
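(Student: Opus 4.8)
The statement is the classical Neumann series argument, so the plan is to recognize $S$ as a perturbation of the identity by an operator of norm strictly less than one. First I would write $S = I_{V_\bot} - K$, where
\[
K := {\mathcal A}_{\bot}^{-1} f'[\hat{u}] |_{V_\bot} + {\mathcal A}_{\bot}^{-1} f'[\hat{u}] |_{V_h} T^{-1} {\mathcal A}_h^{-1} f'[\hat{u}] |_{V_\bot},
\]
which is exactly the bounded linear operator on $V_\bot$ whose induced $L(H^1_0)$-operator norm is, by definition, $\kappa$. Boundedness of $K$ is inherited from the boundedness of $f'[\hat{u}]$, of the projection-based maps ${\mathcal A}_h^{-1}$ and ${\mathcal A}_\bot^{-1}$, and of $T^{-1}$, the last of which is available because $T$ is assumed nonsingular on the finite-dimensional space $V_h$.

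Next I would observe that $V_\bot$, being a closed subspace of the Hilbert space $H^1_0(\Omega)$, is itself complete, so the space of bounded linear operators on $V_\bot$ equipped with the operator norm induced by $\|\cdot\|_{H^1_0}$ is a Banach algebra. Hence the hypothesis $\kappa = \|K\|_{L(H^1_0)} < 1$ makes the Neumann series $\sum_{n=0}^{\infty} K^n$ absolutely convergent there, since $\sum_{n=0}^{\infty} \|K^n\|_{L(H^1_0)} \le \sum_{n=0}^{\infty} \kappa^n = (1-\kappa)^{-1} < \infty$; denote its limit by $B$.

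Then I would verify by telescoping that $B$ is a two-sided inverse of $S$: from $(I_{V_\bot} - K)\sum_{n=0}^{m} K^n = I_{V_\bot} - K^{m+1}$ together with $\|K^{m+1}\|_{L(H^1_0)} \le \kappa^{m+1} \to 0$, letting $m \to \infty$ yields $S B = I_{V_\bot}$, and the same computation with the factors in the opposite order yields $B S = I_{V_\bot}$. Therefore $S$ is invertible with $S^{-1} = B$, and the estimate $\|S^{-1}\|_{L(H^1_0)} \le \sum_{n=0}^{\infty} \kappa^n = (1-\kappa)^{-1}$ follows directly from the triangle inequality applied to the convergent series. There is no substantial obstacle; the only points worth a line of justification are the completeness of $V_\bot$ (so that the series indeed has a limit) and the boundedness of $K$ on $V_\bot$, both of which are immediate from the constructions fixed before the lemma.
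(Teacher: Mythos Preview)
Your proof is correct and is precisely the standard Neumann series argument that the paper is invoking; the paper itself gives no proof, labeling the lemma ``Well known'' and leaving the details to the reader. There is nothing to compare, and your justification of the completeness of $V_\bot$ and the boundedness of $K$ fills in exactly the points a careful reader would want checked.
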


\section{Numerical examples}\label{sec:numeric_example}

\subsection{Verification procedure}\label{sec:verification_procedure}
In this subsection, we describe a verification procedure to realize computer-assisted proofs using the fixed point formulation  (\ref{eq:fixed_point_form_with_H}) with the operator matrix $H$.
The proposed verification method is a combination of the FN and IN methods developed above.

Let $\hat{u} \in V_h \subset H^1_0(\Omega)$ be a solution that satisfies the following equation:
\begin{align}
\label{eq:approx_uh}
(\nabla \hat{u}, \nabla v_h)_{L^2} = (f(\hat{u}), v_h)_{L^2}, ~ \forall v_h \in V_h.
\end{align}
For example, $\hat{u}$ may be obtained by numerical computations with guaranteed accuracy for finite-dimensional nonlinear equations such as the Krawczyk method.
Note that $R_h {\mathcal A}^{-1} {\mathcal F}(\hat{u})$ in (\ref{eq:fixed_point_form_with_H}) becomes zero.

Next, we verify that the matrix ${\vec G}$ defined as $( {\vec G} )_{i, j} := ( \nabla \psi_j, \nabla \psi_i)_{L^2} - (f'[\hat{u}] \psi_j, \psi_i)_{L^2}$ and the operator $S$ are nonsingular.
As Theorem \ref{thm:inverse_theorem} implies that the linearized operator ${\mathcal L}$ defined as (\ref{def:linearize_operator}) is nonsingular, we can transform problem (\ref{main_problem}) into the fixed point equation (\ref{eq:fixed_point_form_with_H}) using the operator matrix $H$.
Note that the constant $\kappa$ in Lemma \ref{Lem:Neumann}, which is used for verifying the nonsingularity of the operator $S$, can be obtained using a method developed in previous studies \cite{nakao2005numerical, nakao2015some,kinoshita2019alternative,watanabe2019improved}.
In particular, \cite{watanabe2019improved} is a good reference regarding the differences resulting from the decomposition of the norm.

Schauder's fixed point theorem or  Banach's fixed point theorem may be applied to the fixed point equation (\ref{eq:fixed_point_form_with_H}) with the candidate sets (\ref{CandidateSet_wh}) and (\ref{CandidateSet_wbot}), as in \cite{nakao1988numerical, nakao1990numerical}.
Here, the fixed point theorem may be selected as follows.
If the nonlinear term $f(u)$ is similar to that in \cite{plum1994enclosures} (e.g., $f(u) \in L^2(\Omega)~ \forall u \in H^1_0(\Omega)$), then because ${\mathcal L}^{-1} {\mathcal G}$ is a compact operator, Schauder's fixed point theorem can be used.
If this is not the case, or if we need to prove the local uniqueness of the solution, it is preferable to use Banach's fixed point theorem.
In fact, a survey of the FN method \cite{nakao2001numerical} makes it easy to select the appropriate method.

\subsection{Example}
In this subsection, we present an example where our method is used to verify a solution of the elliptic boundary value problem
\begin{align}
\left\{
\begin{array}{ll}
-\Delta{u} = u^2 & {\rm{in}} \hspace{0.1cm} \Omega{,}\\
u = 0                                & \partial \Omega{,}
\end{array}
\right.
\label{example_program}
\end{align}
with $\Omega = (0,1)^2$.
This is Emden's equation, and is a good test problem for comparing the proposed method with other approaches because the nonlinear term $u^2$ is widespread. In addition, because the results for $\rho$ in (\ref{CandidateSet_IN}) given by many existing IN methods \cite{plum1994enclosures, plum2008, nakao2005numerical, takayasu2013verified} are almost the same, there is no need to compare numerous existing IN approaches.
Emden's equation has also been discussed in terms of the FN method \cite{watanabe1993numerical}.

All computations were implemented on a computer with 2.20 GHz Intel Xeon E7-4830 v2 CPU $\times$ 4, 2 TB
RAM, and CentOS 7.2 using C++11 with GCC version 4.8.5.
All rounding errors were strictly estimated using the toolbox kv Library \cite{kashiwagikv}.
This guarantees the mathematical correctness of all the results.

We constructed approximate solutions for \eqref{example_program} using a Legendre polynomial basis.
More concretely, define the set $\{ \psi_1, \psi_2, \cdots \psi_N \}$ of Legendre polynomials as
\begin{align*}
\psi_i(x) := \frac{1}{i(i+1)}x(1-x)\frac{dP_i}{dx}(x), ~ i = 1,2, \cdots,
\end{align*}
with
\begin{align*}
P_i = \frac{(-1)^i}{i!}\left( \frac{d}{dx} \right)^i x^i (1 - x)^i
\end{align*}
and define the finite-dimensional subspace as a tensor product 
\begin{align*}
V_h^N :=  \mbox{span}( \psi_1, \cdots \psi_N ) \times \mbox{span}( \psi_1, \cdots \psi_N ).
\end{align*}
Then, $\hat{u} \in V_h^N$ satisfying the finite-dimensional nonlinear problem (\ref{eq:approx_uh}) can be written as
\begin{align*}
\hat{u}(x, y) = \sum_{i,j = 1}^N \hat{u}_{i,j} \psi_i(x) \psi_j(y),
\end{align*}
where $\hat{u}_{i,j}$ is a real number.
For example, when $N=10$, a solution $\hat{u}$ satisfying (\ref{eq:approx_uh}) can be obtained using the Krawczyk method (see Table 1).
Here, $1.23_{456}^{789}$ denotes the interval $[1.23456, 1.23789]$.
As the solution has symmetry, note that $\hat{u}_{i,j}$ is zero when $i$ and $j$ are even.
Under this setting, approximate solutions $\hat{u} \in V_h^{10}$ were computed numerically. Their graphs are displayed in Figure \ref{Fig:approximate_splution}.
Thus, taking the candidate set as 
\begin{align}
	\label{eq:camdodate_set_Wh2d}
		W_h &:= \left\{ \sum_{i,j = 1}^{N} W_{i,j} \psi_i \subset V_h^N ~ | ~ W_i ~ \mbox{is a closed interval in } {\mathbb R} ~   \right\}, \\
	\label{eq:camdodate_set_Wbot2d}
		W_\bot &:= \{ w_{\bot} \in V_{\bot} ~ | ~ \| w_{\bot} \|_{H^1_0}, \le \alpha \},
\end{align}
the method proposed in this paper succeeded  in the numerical verification   of problem \eqref{example_program}. The verified results $W_h$ for the finite-dimensional parts are presented in Tables \ref{Tab:approximate_uh} and \ref{Tab:guarantee_result_norm}, and the results $\alpha$ for the infinite-dimensional parts are given in Table \ref{Tab:guarantee_result_norm}.
Furthermore, we can prove that the exact solution $u^*$ of (\ref{example_program}) exists in $\hat{u} + W_h + W_\bot$, and we can estimate
\begin{align*}
	\| u^* - \hat{u} \|_{H^1_0} &= \sqrt{ \| R_h (u^* - \hat{u}) \|_{H^1_0}^2 + \| (I - R_h) (u^* - \hat{u}) \|_{H^1_0}^2 } \\
		&\le \sqrt{ \sup \| W_h \|_{H^1_0}^2 + \alpha^2 } =: \rho.
\end{align*}

\begin{table}[htbp]
	\caption{For the case $N = 10$, the coefficient $\hat{u}_{i,j}$ of $\hat{u}$ and the coefficient $W_{i,j}$ of the guaranteed result $W_h$ of the finite-dimensional part}
	\label{Tab:approximate_uh}
	\begin{center}
		\renewcommand{\arraystretch}{1.2}
		\begin{tabular}{ | c | c || c || c | }
			\hline
			$i$ & $j$  & $\hat{u}_{i,j}$ & $W_{i,j}$  \\ \hline \hline
			1   &  1    & $366.4134708189518_4^5$ & $[-0.59855339300191369,0.59855339300191369]$  \\ \hline
			1   &  3    & $-152.7013688554553_4^3$ & $[-1.2253950367566178,1.2253950367566178]$\\ \hline 
			1   &  5    & $51.38270599821782_1^2$ & $[-0.98711934879483799,0.98711934879483799]$  \\ \hline 
			1   &  7    & $-10.39254971836046_7^6$ & $[-0.80157453121573663,0.80157453121573663]$ \\ \hline   
			1   &  8    & $1.938028134034345_4^5$ & $[-0.6402005288216297,0.6402005288216297]$   \\ \hline
			3   &  1    & $-152.7013688554553_4^3$ & $[-1.2253950367566338,1.2253950367566338]$ \\ \hline  
			3   &  3    & $106.206570760356_{49}^{50}$ & $[-3.4911377868360356,3.4911377868360356]$ \\ \hline  
			3   &  5    & $-47.02334109624576_2^1$ & $[-5.2643807800044531,5.2643807800044531]$ \\ \hline
			3   &  7    & $11.16273975521142_2^3$ & $[-5.4645410263719932,5.4645410263719932]$   \\ \hline
			3   &  9    & $-2.60410486533461_{90}^{89}$ & $[-4.6121337856900269,4.6121337856900269]$ \\ \hline   
			5   &  1    & $51.38270599821782_1^2$ & $[-0.98711934879489661,0.98711934879489661]$   \\ \hline
			5   &  3    & $-47.02334109624576_2^1$ & $[-5.2643807800046299,5.2643807800046299]$   \\ \hline
			5   &  5    & $23.64100645585704_8^9$ & $[-10.278811561149042,10.278811561149042]$   \\ \hline
			5   &  7    & $-6.393838247547973_9^8$ & $[-12.749567305746329,12.749567305746329]$   \\ \hline
			5   &  9    & $1.615528823767520_5^6$ & $[-10.862495129381467,10.862495129381467]$   \\ \hline
			7   &  1    & $-10.39254971836046_7^6$ & $[-0.80157453121577616,0.80157453121577616]$   \\ \hline
			7   &  3    & $11.16273975521142_2^3$ & $[-5.4645410263722019,5.4645410263722019]$   \\ \hline
			7   &  5    & $-6.393838247547973_9^8$ & $[-12.749567305746491,12.749567305746491]$   \\ \hline
			7   &  7    & $2.118831906690744_1^2$ & $[-17.471111729696386,17.471111729696386]$   \\ \hline
			7   &  9    & $-0.6015754707587541_5^4$ & $[-15.400941972506744,15.400941972506744]$   \\ \hline
			9   &  1    & $1.938028134034345_4^5$ & $[-0.64020052882164458,0.64020052882164458]$   \\ \hline
			9   &  3    & $-2.60410486533461_{90}^{89}$ & $[-4.6121337856902125,4.6121337856902125]$ \\ \hline  
			9   &  5    & $1.615528823767520_5^6$ & $[-10.862495129381687,10.862495129381687]$   \\ \hline
			9   &  7    & $-0.6015754707587541_4^4$ & $[-15.400941972506861,15.400941972506861]$   \\ \hline
			9   &  9    & $0.1963785013053931_7^8$ & $[-13.81780174057843,13.81780174057843]$ \\ \hline
		\end{tabular}
	\end{center}
\end{table}

\begin{table}[htbp]
	\caption{Result of norm evaluation for $N = 10$}
	\label{Tab:guarantee_result_norm}
	\begin{center}
		\renewcommand{\arraystretch}{1.2}
		\begin{tabular}{| c || c | c || c | }
			\hline
			method & $\sup \| W_h \|_{H^1_0} \le$ & $\sup \| W_\bot \|_{H^1_0} \le \alpha$  & $\| u^* - \hat{u} \|_{H^1_0} \le \rho$ \\ \hline \hline
			Proposed & $0.41226282803760456$   &  $0.14598888170328537$    & $0.43734813702877418$ \\ \hline
			Existing IN		 & --   & --    & $1.4392104268509974$ \\ \hline
		\end{tabular}
	\end{center}
\end{table}

FN-Int (e.g., \cite{nakao1990numerical, nakao2001numerical, nakao2011numerical}) was also applied using similar candidate sets (\ref{eq:camdodate_set_Wh2d}) and (\ref{eq:camdodate_set_Wbot2d}) and performing a detailed evaluation of the finite-dimensional and infinite-dimensional parts.
However, as FN-Int only applies Newton's method to the finite-dimensional parts, the verification fails for $N=10$, which implies $N=10$ is too small to achieve successful verification.

As the error bound of the form  $\| u^* - \hat{u} \|_{H^1_0} \le \rho$ is obtained in the course of successful verification by the IN method for $N=10$,  we compare the proposed method with the IN method  from this viewpoint in Table \ref{Tab:guarantee_result_norm}. It is clear from the table that the value of $\rho$ given by the proposed method is smaller than that produced by the existing IN method.
Additionally, as described in subsection \ref{sec:verification_procedure}, because we partially incorporate the detailed calculations of the IN method  into the proposed method, enhancing the IN method could lead to improvements in the results of the proposed method.
The reason why the proposed method produces smaller values of $\rho$ than the existing IN method is discussed in \ref{sec:relation_invnorm_Theorem} and \ref{sec:NK-theorem}.

\begin{figure}[h]
		\begin{center}
		\includegraphics[width=80mm]{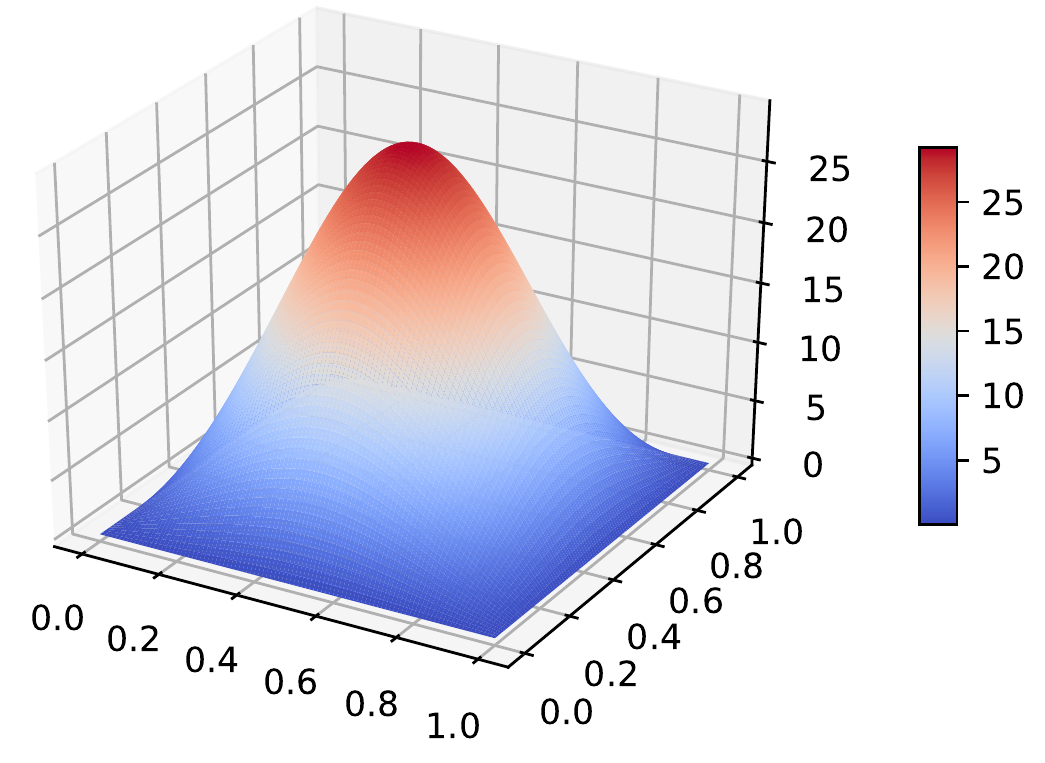}	
		\end{center}
		\caption{An approximate solution of \eqref{example_program} ($N=10$)}
	\label{Fig:approximate_splution}
\end{figure}

From Table \ref{Tab:approximate_uh}, which presents results for $N = 10$, the error interval of the finite-dimensional part appears to be rather large.
This is because $N$ is too small. In fact, very sharp results can be obtained in the case of $N = 40$ (see Table \ref{Tab:approximate_uh_40}).
Here, $\pm 1.23_{456}^{789} \times 10^{-11}$ denotes the interval $[-1.23456 \times 10^{-11}, 1.23789 \times 10^{-11}]$.

\begin{table}[htbp]
	\caption{For the case $N = 40$, the coefficient $\hat{u}_{i,j}$ of $\hat{u}$ and the coefficient $W_{i,j}$ of the guaranteed result $W_h$ of the finite-dimensional part}
	\label{Tab:approximate_uh_40}
	\begin{center}
		\renewcommand{\arraystretch}{1.2}
		\begin{tabular}{ | c | c || c || c | }
			\hline
			$i$ & $j$  & $\hat{u}_{i,j}$ & $W_{i,j}$  \\ \hline \hline
1 & 1 & $ 366.4132119391286_8^9 $ & $ \pm 2.890339699_{1423603}^{0254412} \times 10^{-11} $ \\ \hline
1 & 3 & $ -152.7015836846106_1^0$ & $ \pm5.953039492_{5126237}^{6304853} \times 10^{-11} $ \\ \hline
1 & 5 & $ 51.38361560840872_6^7 $ & $ \pm 4.9329670131_{634702}^{167493} \times 10^{-11} $ \\ \hline
1 & 7 & $ -10.39601151027711_1^0 $ &$ \pm 4.3133385563_{658583}^{703399} \times 10^{-11} $ \\ \hline
1 & 9 & $ 1.946100573383924_1^2 $ & $ \pm 4.05963569250_{10538}^{54126} \times 10^{-11} $ \\ \hline
$\cdots$ & $\cdots$ &  $\cdots$  &  $\cdots$  \\ \hline
1 & 39 & $-4.66924461_{568399560}^{123737562} \times 10^{-12} $ & $ \pm 2.40673056579_{81076}^{49860} \times 10^{-11} $ \\ \hline
3 & 1 & $ -152.7015836846106_1^0 $ & $ \pm 5.953039492_{5723655}^{6902272} \times 10^{-11} $ \\ \hline
3 & 3 & $ 106.2071425465360_7^8  $ & $ \pm 1.7449437714_{236332}^{139130} \times 10^{-10} $ \\ \hline
3 & 5 & $ -47.02462830095045_7^6 $ & $ \pm 2.780252688_{5991257}^{6047521} \times 10^{-10} $ \\ \hline
3 & 7 & $ 11.16900188749037_0^1 $ & $ \pm 3.25583866986_{64090}^{45918} \times 10^{-10} $ \\ \hline
3 & 9 & $ -2.604322866563870_7^6 $ & $\pm 3.59972699307_{48283}^{54461} \times 10^{-10} $ \\ \hline
$\cdots$ & $\cdots$ &  $\cdots$  &  $\cdots$  \\ \hline
3 & 39 & $ -6.18701438_{85354111}^{75591113} \times 10^{-11} $ & $ \pm 2.432811717189_{8549}^{6228} \times 10^{-10} $ \\ \hline
5 & 1 & $ 51.38361560840872_6^7 $ & $ \pm 4.932967013_{3060612}^{2593404} \times 10^{-11} $ \\ \hline
5 & 3 & $ -47.02462830095045_7^6 $ & $ \pm 2.7802526886_{393443}^{449707} \times 10^{-10} $ \\ \hline
5 & 5 & $ 23.6420677490914_{49}^{50} $ & $ \pm 5.90180296672_{40844}^{13061} \times 10^{-10} $ \\ \hline
5 & 7 & $ -6.399132323486540_8^7 $ & $ \pm 8.5522929274_{298881}^{306522} \times 10^{-10} $ \\ \hline
5 & 9 & $ 1.610834571271026_0^1 $ & $ \pm 1.03788894888648_{46}^{85} \times 10^{-09} $ \\ \hline
$\cdots$ & $\cdots$ &  $\cdots$  &  $\cdots$  \\ \hline
5 & 39 & $ -2.4529205624_{387362}^{231429} \times 10^{-10} $ & $ \pm 7.790762135816_{5518}^{2716} \times 10^{-10} $ \\ \hline
7 & 1 & $ -10.39601151027711_1^0 $ & $ \pm 4.3133385564_{792260}^{837076} \times 10^{-11} $ \\ \hline
7 & 3 & $ 11.16900188749037_0^1 $ & $ \pm 3.25583866991_{72636}^{54463} \times 10^{-10} $ \\ \hline
7 & 5 & $ -6.399132323486540_8^7 $ & $ \pm 8.55229292747_{66569}^{74210} \times 10^{-10} $ \\ \hline
7 & 7 & $ 2.124267226042428_2^3 $ & $ \pm 1.43656500261616_{32}^{22} \times 10^{-09} $ \\ \hline
7 & 9 & $ -0.5986559904211578_9^8 $ & $ \pm 1.936424374565_{5089}^{4920} \times 10^{-09} $ \\ \hline
$\cdots$ & $\cdots$ &  $\cdots$  &  $\cdots$  \\ \hline
7 & 39 & $ -6.160592931_{6028183}^{4758797} \times 10^{-10} $ & $ \pm 1.6509622637659_{917}^{898} \times 10^{-09} $ \\ \hline
9 & 1 & $ 1.946100573383924_1^2 $ & $ \pm 4.0596356925_{874838}^{918427} \times 10^{-11} $ \\ \hline
9 & 3 & $ -2.604322866563870_7^6 $ & $ \pm 3.59972699312_{58183}^{64361} \times 10^{-10} $ \\ \hline
9 & 5 & $ 1.610834571271026_0^1 $ & $ \pm 1.03788894889364_{30}^{69} \times 10^{-09} $ \\ \hline
9 & 7 & $ -0.5986559904211578_9^8 $ & $ \pm 1.9364243745704_{203}^{034} \times 10^{-09} $ \\ \hline
9 & 9 & $ 0.1907886733421144_7^8 $ & $ \pm 2.8485078315651_{604}^{930} \times 10^{-09} $ \\ \hline
$\cdots$ & $\cdots$ &  $\cdots$  &  $\cdots$  \\ \hline
9 & 39 & $ -1.2284714549_{968888}^{703340} \times 10^{-09} $ & $ \pm 2.8184100199765_{508}^{181} \times 10^{-09} $ \\ \hline
$\cdots$ & $\cdots$ &  $\cdots$  &  $\cdots$  \\ \hline
39 & 39 & $ 9.32486073_{24817149}^{46069927} \times 10^{-10} $ & $ \pm 5.810714309286_{8021}^{9468} \times 10^{-09} $ \\ \hline
		\end{tabular}
	\end{center}
\end{table}

\section{Conclusion}
In this paper, we have described a new formulation (\ref{eq:fixed_point_form_with_H}) for the numerical proof of the existence of solutions for elliptic problems. The proposed approach has advantages over both the existing FN and IN methods.   
In particular, we derived a specific formula for the operator matrix $H$, which is needed to compute (\ref{eq:fixed_point_form_with_H}), in Theorem \ref{thm:inverse_theorem}.
As a result, while using the infinite-dimensional Newton's method, the error evaluation for each coefficient interval of the finite basis is also enclosed, as demonstrated by the results in Table \ref{Tab:approximate_uh}.  This is considered an advantage of the FN method.
Furthermore, the proposed method produces better results than the IN method, even for the norm estimation, as demonstrated by Table \ref{Tab:guarantee_result_norm}.

\renewcommand{\appendix}{}
\appendix
\setcounter{section}{0}
\renewcommand{\thesection}{Appendix \Alph{section}}
\renewcommand{\thesubsection}{Appendix \Alph{section}-\arabic{subsection}.}

\section{Another formula for the operator matrix $H$}
In Theorem \ref{thm:inverse_theorem}, the Schur complement $S$ (defined in (\ref{def:S})) for the $(1, 1)$ element $T$ of the operator matrix $D$ is created and some properties are proved. 
We present another form of the operator matrix $H$ using the Schur complement $S_h: V_h \rightarrow V_h$ for the $(2, 2)$ element $I_{V_{\bot}} - {\mathcal A}_{\bot}^{-1} f'[\hat{u}] |_{V_\bot} (=: G)$ of the operator matrix $D$.

\begin{theorem}\label{thm:inverse_theorem2}
The infinite-dimensional operator $G: V_{\bot} \rightarrow V_{\bot}$ defined in (\ref{def:YZG}) is assumed to be nonsingular.
Let $S_h: V_h \rightarrow V_h$ be a linear operator defined as
\begin{align}
	\label{def:Sh}
		S_h := T - {\mathcal A}_h^{-1} f'[\hat{u}] |_{V_\bot} G^{-1} {\mathcal A}_{\bot}^{-1} f'[\hat{u}] |_{V_h}.
\end{align}
If $S_h$ is nonsingular, then the operators $D$ and  ${\mathcal L}$ defined by (\ref{def:operator_matrix_D}) and (\ref{def:linearize_operator}), respectively, are also nonsingular, and the solution $(\phi_h, \phi_{\bot})^T \in V_h \times V_{\bot}$ that satisfies (\ref{eq:linear_prog_block}) is represented as 
{\small
\begin{align*}
	& \left(
	\begin{array}{c}
		\phi_h \\
		\phi_{\bot}
	\end{array}
	\right) \\ 
	= & \left(
	\begin{array}{c c}
		S_h^{-1} & S_h^{-1} {\mathcal A}_h^{-1} f'[\hat{u}] |_{V_\bot} G^{-1}  \\
		G^{-1} {\mathcal A}_{\bot}^{-1} f'[\hat{u}] |_{V_h} S_h^{-1} ~~ & ~~ G^{-1} + G^{-1} {\mathcal A}_{\bot}^{-1} f'[\hat{u}] |_{V_h} S_h^{-1} {\mathcal A}_h^{-1} f'[\hat{u}] |_{V_\bot} G^{-1}
	\end{array}
	\right)
	\left(
	\begin{array}{c}
		{\mathcal A}_h^{-1} g \\
		{\mathcal A}_{\bot}^{-1} g
	\end{array}
	\right).
\end{align*}
}
\end{theorem}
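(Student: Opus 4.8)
The plan is to mirror the proof of Theorem~\ref{thm:inverse_theorem}, but performing the block elimination with respect to the $(2,2)$ block $G$ rather than the $(1,1)$ block $T$. Write $D = \left(\begin{array}{cc} T & Y \\ Z & G \end{array}\right)$ with $Y, Z, G$ as in (\ref{def:YZG}). The first step is to observe that $S_h$ defined in (\ref{def:Sh}) is precisely the Schur complement of $G$ in $D$, i.e.\ $S_h = T - Y G^{-1} Z$; this requires only unwinding the definitions $Y = -{\mathcal A}_h^{-1} f'[\hat{u}]|_{V_\bot}$ and $Z = -{\mathcal A}_\bot^{-1} f'[\hat{u}]|_{V_h}$, noting the two sign flips cancel.

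Next, assuming $G$ and $S_h$ are nonsingular, I would introduce the candidate inverse
\begin{align*}
\bar{D}_h := \left(\begin{array}{cc}
S_h^{-1} & -S_h^{-1} Y G^{-1} \\
-G^{-1} Z S_h^{-1} & \quad G^{-1} + G^{-1} Z S_h^{-1} Y G^{-1}
\end{array}\right),
\end{align*}
which is the standard block-UL factorization formula. The verification that $\bar{D}_h D = I$ and $D \bar{D}_h = I$ proceeds exactly as in the proof of Theorem~\ref{thm:inverse_theorem}: expand the four block products, collect terms, and repeatedly substitute $T - Y G^{-1} Z = S_h$ to collapse the off-diagonal blocks to zero and the diagonal blocks to $I_{V_h}$ and $I_{V_\bot}$. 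This is routine matrix bookkeeping and carries no real difficulty. Matching $-Y G^{-1} = -(-{\mathcal A}_h^{-1} f'[\hat{u}]|_{V_\bot}) G^{-1} = {\mathcal A}_h^{-1} f'[\hat{u}]|_{V_\bot} G^{-1}$ and similarly for the $Z$ term recovers the explicit entries stated in the theorem, so $H = D^{-1} = \bar{D}_h$ in this representation.

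Finally, the nonsingularity of $D$ gives the nonsingularity of ${\mathcal L}$ by exactly the argument already used: injectivity of ${\mathcal L}$ follows because ${\mathcal L}\phi = 0$ forces, via (\ref{eq:linear_prog_block}) with $g=0$ and the bijectivity of ${\mathcal A}$, that $(\phi_h,\phi_\bot)^T = D^{-1}(0,0)^T = 0$; surjectivity follows because for any $g \in H^{-1}(\Omega)$ the function $\phi := \phi_h + \phi_\bot$ defined by applying $\bar{D}_h$ to $({\mathcal A}_h^{-1}g, {\mathcal A}_\bot^{-1}g)^T$ satisfies (\ref{eq:linear_prog_block}), hence (\ref{eq:linear_prog}). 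The only genuine content beyond Theorem~\ref{thm:inverse_theorem} is the algebraic identity $S_h = T - Y G^{-1} Z$ and getting the signs right in the final matrix entries; I expect the main (very mild) obstacle to be purely notational care in the block multiplications, since no new analytic idea is needed. One could even shorten the argument by citing Theorem~\ref{thm:inverse_theorem} applied to a permuted block matrix, but writing the direct verification is cleaner and parallel to the existing proof.
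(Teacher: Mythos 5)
Your proposal is correct and follows essentially the same route as the paper: the paper's proof likewise introduces the candidate inverse built from the Schur complement $S_h = T - YG^{-1}Z$ of the $(2,2)$ block $G$ and then appeals to the same block-multiplication verification as in Theorem~\ref{thm:inverse_theorem}. The only difference is presentational — the paper compresses the verification into a one-line reference to the earlier proof, while you spell out the sign bookkeeping and the injectivity/surjectivity argument explicitly.
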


\begin{proof}
Using the notation in (\ref{def:YZG}), we define the operator matrix $\bar{D}$ as 
\begin{align*}
\bar{D} := \left( \begin{array}{cc}
S_h^{-1} & -S_h^{-1} Y G^{-1} \\
-G^{-1} Z S_h^{-1} ~~ & ~ G^{-1} + G^{-1} Z S_h^{-1} Y G^{-1} 
\end{array} \right).
\end{align*}
Then, $\bar{D}$ coincides with the inverse operator matrix of  $D$ in \eqref{def:operator_matrix_D} in the same way as in the proof of Theorem \ref{thm:inverse_theorem}.

\qed
\end{proof}

As we can verify the nonsingularity of the operator $G$ in a similar manner to that in Lemma \ref{Lem:Neumann}, it is also possible to derive a verification procedure using Theorem \ref{thm:inverse_theorem2} instead of Theorem \ref{thm:inverse_theorem}.

Moreover, it is possible to derive, from the operator matrix $D$ in  Theorem \ref{thm:inverse_theorem}, the results in previous papers \cite{nakao2005numerical, nakao2015some,watanabe2019improved} for the norm evaluation $\| {\mathcal L}^{-1} \|_{H^{-1}, H^1_0}$ of the inverse operator ${\mathcal L}^{-1}$ (see \ref{sec:relation_invnorm_Theorem}).
However, there has been no previous discussion of the norm evaluation of the operator $ {\mathcal L}^{-1} \; : \; H^{-1} \rightarrow  H^1_0$ using the operator matrix $D$ in Theorem \ref{thm:inverse_theorem2}.

\section{Relation between the norm $\| {\mathcal L}^{-1} \|_{L(H^{-1}, H^1_0)}$ and Theorem \ref{thm:inverse_theorem}} \label{sec:relation_invnorm_Theorem}
There have been many studies on the estimation of the norm $\|  {\mathcal L}^{-1}  \|_{L(H^{-1},  H^1_0)}$, because it plays an important role in the existing IN method (e.g., \cite{plum1991bounds, plum1994enclosures, oishi1995numerical, nakao2005numerical, tanaka2014verified, nakao2015some, watanabe2019improved}). In the following, we show that it is also possible to obtain the upper bound of the inverse operator norm $\| {\mathcal L}^{-1} \|_{L(H^{-1}, H^1_0)}$ using Theorem \ref{thm:inverse_theorem}.

\begin{corollary}[of Theorem \ref{thm:inverse_theorem}]\label{cor:inverse_operator}
Under the same assumptions as in Theorem \ref{thm:inverse_theorem}, ${\mathcal L}$ is invertible and it follows that
{\small
\begin{align*}
& \| {\mathcal L}^{-1} \|_{L(H^{-1}, H^1_0)} \le \\
& \left\| \! \left( \! \begin{array}{c c}
		\| T^{-1}  \! +  \! T^{-1}{\mathcal A}^{-1}_h f'[\hat{u}]|_{V_{\bot}} S^{-1} {\mathcal A}^{-1}_{\bot} f'[\hat{u}]|_{V_h}T^{-1}\|_{L(H^1_0)} \! & \! \| T^{-1}{\mathcal A}^{-1}_h f'[\hat{u}]|_{V_{\bot}}S^{-1} \|_{L(H^1_0)} \\
		\| S^{-1}  {\mathcal A}^{-1}_{\bot} f'[\hat{u}]|_{V_h}T^{-1} \|_{L(H^1_0)}   & \| S^{-1} \|_{L(H^1_0)}
	\end{array} \! \right) \! \right\|_{E} \! ,
\end{align*} }
where $\| \cdot \|_{E}$ denotes the Euclidean norm.
\end{corollary}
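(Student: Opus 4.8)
The plan is to read off an explicit operator-matrix representation of $\mathcal{L}^{-1}$ from Theorem~\ref{thm:inverse_theorem}, pass to scalar norm inequalities that live in $\mathbb{R}^2$, and then exploit the $H^1_0$-orthogonality of the Ritz splitting together with the fact that $\mathcal{A}^{-1}$ is an isometry from $H^{-1}(\Omega)$ onto $H^1_0(\Omega)$.

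First I would fix $g \in H^{-1}(\Omega)$ and set $\phi := \mathcal{L}^{-1}g$, which is well defined by Theorem~\ref{thm:inverse_theorem}. Writing $\phi_h := R_h\phi$, $\phi_\bot := (I-R_h)\phi$ and denoting by $H_{ij}$ the four operator blocks displayed in Theorem~\ref{thm:inverse_theorem} (so that $H = (H_{ij})$ is the matrix of (\ref{def:H1})), the theorem gives $\phi_h = H_{11}\mathcal{A}_h^{-1}g + H_{12}\mathcal{A}_\bot^{-1}g$ and $\phi_\bot = H_{21}\mathcal{A}_h^{-1}g + H_{22}\mathcal{A}_\bot^{-1}g$. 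Since each $H_{ij}$ maps one of $V_h, V_\bot$ into one of $V_h, V_\bot$, all of which are closed subspaces of $H^1_0(\Omega)$, the operator norms $\|H_{ij}\|_{L(H^1_0)}$ are meaningful, and the triangle inequality yields the entrywise inequality $x \le M y$ between the nonnegative vectors $x := (\|\phi_h\|_{H^1_0},\, \|\phi_\bot\|_{H^1_0})^T$ and $y := (\|\mathcal{A}_h^{-1}g\|_{H^1_0},\, \|\mathcal{A}_\bot^{-1}g\|_{H^1_0})^T$, where $M_{ij} := \|H_{ij}\|_{L(H^1_0)}$ is precisely the matrix appearing in the corollary.

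Next I would record two elementary identities. Because $R_h$ is the orthogonal projection onto $V_h$ with respect to $(\cdot,\cdot)_{H^1_0}$, the Pythagorean relation gives $\|\phi\|_{H^1_0}^2 = \|\phi_h\|_{H^1_0}^2 + \|\phi_\bot\|_{H^1_0}^2 = |x|_2^2$ and, applied to $\mathcal{A}^{-1}g$ in place of $\phi$, $\|\mathcal{A}^{-1}g\|_{H^1_0}^2 = |y|_2^2$. Moreover, if $v := \mathcal{A}^{-1}g$ is the weak solution of $(\nabla v, \nabla\varphi)_{L^2} = \langle g,\varphi\rangle$ for all $\varphi\in H^1_0(\Omega)$, then $\|v\|_{H^1_0} = \|g\|_{H^{-1}}$ directly from the definition of the dual norm, so $|y|_2 = \|g\|_{H^{-1}}$. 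Combining $x \le My$ with row-by-row Cauchy--Schwarz, $|x|_2^2 = \sum_i x_i^2 \le \sum_i\big(\textstyle\sum_j M_{ij}y_j\big)^2 \le \big(\sum_{i,j} M_{ij}^2\big)\,|y|_2^2 = \|M\|_E^2\,\|g\|_{H^{-1}}^2$. Hence $\|\mathcal{L}^{-1}g\|_{H^1_0} = |x|_2 \le \|M\|_E\,\|g\|_{H^{-1}}$, and taking the supremum over $g \in H^{-1}(\Omega)\setminus\{0\}$ gives the stated bound.

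The argument is short once the representation of Theorem~\ref{thm:inverse_theorem} is available, so there is no real obstacle; the only points demanding a little care are the bookkeeping of the domains and codomains of the blocks $H_{ij}$ (needed so the $L(H^1_0)$ operator norms make sense and so the first-row/second-row expansions are legitimate) and the observation that the Euclidean matrix norm $\|\cdot\|_E$, rather than the sharper spectral norm, is exactly what the row-wise Cauchy--Schwarz step produces --- this is where a reader might otherwise be puzzled by the appearance of $\|\cdot\|_E$.
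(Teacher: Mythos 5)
Your proof is correct and follows essentially the same route as the paper: both read off the $2\times 2$ block representation of $H$ from Theorem~\ref{thm:inverse_theorem}, pass to the nonnegative vector of $H^1_0$-norms via the triangle inequality, use the Ritz-orthogonality (Pythagoras) and the $H^{-1}\to H^1_0$ isometry of $\mathcal{A}^{-1}$ to identify $|y|_2 = \|g\|_{H^{-1}}$, and finish with the row-wise Cauchy--Schwarz bound that produces the Frobenius/Euclidean norm $\|M\|_E$. The only cosmetic difference is that the paper first introduces $\|H\|_{L(V_h\times V_\bot)}$ as an intermediate quantity and then bounds it by $\|M\|_E$, whereas you collapse these two steps into one; your closing remark that the spectral norm of $M$ would be sharper than $\|M\|_E$ is accurate but not part of the stated result.
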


\begin{proof}

We define the norm of the direct product space $V_h \times V_{\bot}$ as
{\small
\begin{align*}
\left\| \left( \begin{array}{c}
\phi_h \\
\phi_\bot
\end{array} \right) \right\|_{V_h \times V_{\bot}} := \left\| \left( \begin{array}{c}
\| \phi_h \|_{H^1_0} \\
\| \phi_\bot \|_{H^1_0}
\end{array} \right) \right\|_{E} = \sqrt{ \| \phi_h \|_{H^1_0}^2 + \| \phi_\bot \|_{H^1_0}^2}, ~ (\phi_h, \phi_\bot )^T \in V_h \times V_{\bot}.
\end{align*}}
Then, from Theorem \ref{thm:inverse_theorem}, the solution $\phi$ of the linear equation (\ref{eq:linear_prog}) can be  evaluated as
\begin{align*}
\| \phi \|_{H^1_0} =& \left\| \left( \begin{array}{c}
\| R_h \phi \|_{H^1_0} \\
\| (I-R_h) \phi \|_{H^1_0}
\end{array} \right) \right\|_{E} 
=  \left\| H \left( \begin{array}{c}
{\mathcal A}^{-1}_h g \\
{\mathcal A}^{-1}_\bot g
\end{array} \right) \right\|_{V_h \times V_{\bot}} \\
\le& \| H \|_{L( V_h \times V_\bot )} \left\| \left( \begin{array}{c}
{\mathcal A}^{-1}_h g \\
{\mathcal A}^{-1}_\bot g
\end{array} \right) \right\|_{V_h \times V_\bot} 
= \| H \|_{L( V_h \times V_\bot )} \left\| {\mathcal L} \phi \right\|_{H^{-1}}.
\end{align*}
Therefore, we have
\begin{align*}
\| {\mathcal L}^{-1} \|_{L(H^{-1}, H^1_0)} \le \| H \|_{L( V_h \times V_\bot )}.
\end{align*}
Moreover, using the structure of the operator matrix $H$ defined in (\ref{def:H1}), we have
\begin{align*}
& \| H \|_{L( V_h \times V_\bot )} = \sup_{z = (z_h, z_\bot)^T \in V_h \times V_{\bot} } \frac{ \left\| \left( \begin{array}{c c}
H_{11} & H_{12} \\
H_{21} & H_{22}
\end{array} \right) \left( \begin{array}{c}
z_h \\
z_{\bot}
\end{array} \right) \right\|_{V_h \times V_\bot } }{ \| z \|_{V_h \times V_{\bot}} } \\
=& \sup_{z = (z_h, z_\bot)^T \in V_h \times V_{\bot} } \frac{ \left\| \left( \begin{array}{c}
\| H_{11} z_h + H_{12} z_{\bot} \|_{H^1_0} \\
\| H_{21} z_h + H_{22} z_{\bot} \|_{H^1_0}
\end{array} \right) \right\|_{E} }{ \| z \|_{V_h \times V_{\bot}} } \\
\le & \left\| \left( \begin{array}{c c}
\| H_{11} \|_{L(H^1_0)} & \| H_{12} \|_{L(H^1_0)} \\
\| H_{21} \|_{L(H^1_0)} & \| H_{22} \|_{L(H^1_0)}
\end{array} \right) \right\|_{E}.
\end{align*}
\qed
\end{proof}

\begin{remark} \label{remark:relation_Norm_Theorem}
The estimation of the inverse operator norm in Corollary~\ref{cor:inverse_operator} is closely related to the method used in previous studies \cite{nakao2005numerical,  nakao2015some, watanabe2019improved}.
However, it is recommended that Theorem \ref{thm:inverse_theorem} be applied directly without using Corollary \ref{cor:inverse_operator}.
For example, for the first term on the right-hand side of (\ref{eq:IN_Method}), the existing IN method evaluates 
\begin{align*}
\| {\mathcal L}^{-1} {\mathcal F}(\hat{u}) \|_{H^1_0} \le \| {\mathcal L}^{-1} \|_{ L(H^{-1}, H^1_0) } \| {\mathcal F}(\hat{u}) \|_{H^{-1}}
\end{align*}
to apply Corollary \ref{cor:inverse_operator}.
If $\hat{u}$ is a solution satisfying (\ref{eq:approx_uh}), then $R_h {\mathcal A}^{-1} {\mathcal F}(\hat{u})$ vanishes, but this is not reflected in the estimation of the norm $\| {\mathcal L}^{-1} \|_{ L(H^{-1}, H^1_0) }$.
In contrast, by using Theorem \ref{thm:inverse_theorem} directly, we can estimate 
\begin{align*}
\left( \begin{array}{c c}
H_{11} & H_{12} \\
H_{21} & H_{22}
\end{array} \right) \left( \begin{array}{c}
R_h {\mathcal A}^{-1} {\mathcal F}(\hat{u}) \\
(I - R_h)  {\mathcal A}^{-1} {\mathcal F}(\hat{u})
\end{array} \right) = \left( \begin{array}{c c}
0 & H_{12} \\
0 & H_{22}
\end{array} \right) \left( \begin{array}{c}
0 \\
(I - R_h)  {\mathcal A}^{-1} {\mathcal F}(\hat{u})
\end{array} \right).
\end{align*}
Therefore, the estimations of $\| H_{11} \|_{L(H^1_0)}$ and $\| H_{21} \|_{L(H^1_0)}$ are useless in the evaluation for $\| {\mathcal L}^{-1} \|_{L(H^{-1}, H^1_0)}$. Namely, the proposed method is better than existing IN methods for evaluating $\| {\mathcal L}^{-1} \|_{L(H^{-1}, H^1_0)}$.
\end{remark}

The inverse operator norm $\| {\mathcal L}^{-1} \|_{L(H^{-1}, H^1_0)}$ can be evaluated from Theorem \ref{thm:inverse_theorem2} using the same procedure as Corollary \ref{cor:inverse_operator}.

\begin{corollary}[of Theorem \ref{thm:inverse_theorem2}]\label{cor:inverse_operator2}
Under the same assumptions as in Theorem \ref{thm:inverse_theorem2}, ${\mathcal L}$ is invertible and
{\small
\begin{align*}
& \| {\mathcal L}^{-1} \|_{L(H^{-1}, H^1_0)} \le \\
& \left\| \! \left( \! \begin{array}{c c}
		\| S_h^{-1} \|_{L(H^1_0)} & \| S_h^{-1} {\mathcal A}_h^{-1} f'[\hat{u}] |_{V_\bot} G^{-1} \|_{L(H^1_0)} \\
		\| G^{-1} {\mathcal A}_{\bot}^{-1} f'[\hat{u}] |_{V_h} S_h^{-1} \|_{L(H^1_0)}  \! &  \! \| G^{-1} + G^{-1} {\mathcal A}_{\bot}^{-1} f'[\hat{u}] |_{V_h} S_h^{-1} {\mathcal A}_h^{-1} f'[\hat{u}] |_{V_\bot} G^{-1} \|_{L(H^1_0)}
	\end{array} \! \right) \! \right\|_{E}.
\end{align*} }
\end{corollary}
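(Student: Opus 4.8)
The plan is to follow verbatim the argument used in the proof of Corollary~\ref{cor:inverse_operator}, simply replacing the explicit form of $H$ coming from Theorem~\ref{thm:inverse_theorem} by the one coming from Theorem~\ref{thm:inverse_theorem2}. First I would equip the direct product space $V_h \times V_{\bot}$ with the norm $\| (\phi_h, \phi_\bot)^T \|_{V_h \times V_\bot} := \sqrt{ \| \phi_h \|_{H^1_0}^2 + \| \phi_\bot \|_{H^1_0}^2 }$, exactly as before. Since $R_h$ is the $H^1_0$-orthogonal Ritz projection and ${\mathcal A}: H^1_0(\Omega) \to H^{-1}(\Omega)$ is an isometry, for any $g \in H^{-1}(\Omega)$ and the associated solution $\phi$ of ${\mathcal L}\phi = g$ one has $\| \phi \|_{H^1_0} = \| (\phi_h, \phi_\bot)^T \|_{V_h \times V_\bot}$ and $\| ({\mathcal A}_h^{-1} g, {\mathcal A}_\bot^{-1} g)^T \|_{V_h \times V_\bot} = \| {\mathcal A}^{-1} g \|_{H^1_0} = \| g \|_{H^{-1}} = \| {\mathcal L}\phi \|_{H^{-1}}$.

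Next, by Theorem~\ref{thm:inverse_theorem2} the hypothesis that $G$ and $S_h$ are nonsingular already guarantees that ${\mathcal L}$ is invertible and that $(\phi_h, \phi_\bot)^T = H ({\mathcal A}_h^{-1} g, {\mathcal A}_\bot^{-1} g)^T$, where the blocks of $H$ are read off from the stated matrix, namely $H_{11} = S_h^{-1}$, $H_{12} = S_h^{-1} {\mathcal A}_h^{-1} f'[\hat u]|_{V_\bot} G^{-1}$, $H_{21} = G^{-1} {\mathcal A}_\bot^{-1} f'[\hat u]|_{V_h} S_h^{-1}$, and $H_{22} = G^{-1} + G^{-1} {\mathcal A}_\bot^{-1} f'[\hat u]|_{V_h} S_h^{-1} {\mathcal A}_h^{-1} f'[\hat u]|_{V_\bot} G^{-1}$. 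Combining the two identities of the previous paragraph gives $\| {\mathcal L}^{-1} \|_{L(H^{-1}, H^1_0)} \le \| H \|_{L(V_h \times V_\bot)}$, so it only remains to bound $\| H \|_{L(V_h \times V_\bot)}$ by the Euclidean norm of the $2 \times 2$ matrix of block operator norms.

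For that last step I would repeat the computation at the end of the proof of Corollary~\ref{cor:inverse_operator}: for $z = (z_h, z_\bot)^T$, estimate $\| H_{11} z_h + H_{12} z_\bot \|_{H^1_0} \le \| H_{11}\|_{L(H^1_0)} \| z_h \|_{H^1_0} + \| H_{12}\|_{L(H^1_0)} \| z_\bot \|_{H^1_0}$ and similarly for the second component, and then apply the Cauchy--Schwarz inequality in ${\mathbb R}^2$ to factor out $\| z \|_{V_h \times V_\bot}$, which yields the Euclidean-norm bound stated in the corollary. Substituting the explicit blocks $H_{ij}$ listed above produces precisely the claimed inequality.

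The argument is essentially routine, so there is no genuine obstacle; the only points requiring a little care are (i) matching the signs of the off-diagonal blocks --- in the proof of Theorem~\ref{thm:inverse_theorem2} one has $-S_h^{-1} Y G^{-1}$ with $Y = -{\mathcal A}_h^{-1} f'[\hat u]|_{V_\bot}$, so the minus signs cancel and the expression agrees with the corollary's statement --- and (ii) the Cauchy--Schwarz step, where one must bound $\sum_i \bigl(\sum_j \| H_{ij} \|_{L(H^1_0)} \| z_j \|_{H^1_0}\bigr)^2$ by $\| M \|_E^2 \, \| z \|_{V_h \times V_\bot}^2$ with $M$ the matrix of block norms, rather than by a cruder quantity.
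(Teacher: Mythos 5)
Your proof is correct and takes exactly the route the paper intends: the paper does not supply a separate argument for Corollary~\ref{cor:inverse_operator2} but simply remarks that it follows ``using the same procedure as Corollary~\ref{cor:inverse_operator},'' and your proposal reproduces that procedure faithfully --- the same product norm on $V_h \times V_\bot$, the same use of the orthogonality of $R_h$ and the isometry ${\mathcal A}$ to pass from $\|\phi\|_{H^1_0}$ and $\|g\|_{H^{-1}}$ to the product-space norms, the bound $\|{\mathcal L}^{-1}\| \le \|H\|_{L(V_h\times V_\bot)}$, and finally the componentwise estimate of $\|H\|$ by the Euclidean norm of the $2\times 2$ matrix of block operator norms. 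Your check of the sign cancellations between $\bar{D}$ from Theorem~\ref{thm:inverse_theorem2} (built from $Y = -{\mathcal A}_h^{-1}f'[\hat u]|_{V_\bot}$, $Z = -{\mathcal A}_\bot^{-1}f'[\hat u]|_{V_h}$) and the explicit blocks in the corollary's statement is also correct.
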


\section{An exact expression for the Ritz projection error in the solution of the noncoercive  equation (\ref{eq:linear_prog})}\label{sec:Ritz_projection_error}
In this section, we derive an expression for the Ritz projection error $\phi - R_h \phi$ in the exact solution $\phi$ of the linear noncoercive elliptic PDE (\ref{eq:linear_prog}).
Using this result, we can, for example, determine a constant $C$ that satisfies the inequality $\| \phi - R_h \phi \|_{H^1_0} \le C \| {\mathcal L} \phi \|_{L^2}, ~ \phi \in \{ \phi \in H^1_0 | \Delta \phi \in L^2(\Omega) \}$, even though the elliptic operator $\mathcal L$ is noncoercive (cf.  \cite{nakao2008guaranteed}).

\begin{corollary}[of Theorem \ref{thm:inverse_theorem}]\label{cor:projection_error}
Under the same assumptions as in Theorem \ref{thm:inverse_theorem}, the Ritz projection error of the solution $\phi \in H^1_0(\Omega)$ of the linear noncoercive elliptic PDE (\ref{eq:linear_prog}) is
\begin{align*}
\phi - R_h \phi = S^{-1} (I - R_h) {\mathcal A}^{-1} \left( I + f'[\hat{u}]|_{V_h}T^{-1} R_h {\mathcal A}^{-1}\right) {\mathcal L} \phi.
\end{align*}
\end{corollary}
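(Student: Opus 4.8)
The plan is to obtain the identity by reading off the second (infinite‑dimensional) block of the explicit representation provided by Theorem~\ref{thm:inverse_theorem} and rearranging it algebraically; no fixed‑point argument or estimate is needed here. First I would recall the notation introduced before Theorem~\ref{thm:inverse_theorem}: for the solution $\phi$ of $\mathcal{L}\phi = g$ one sets $\phi_h := R_h\phi$ and $\phi_\bot := (I-R_h)\phi$, so that $\phi - R_h\phi = \phi_\bot$. Under the hypotheses of Theorem~\ref{thm:inverse_theorem}, the operator matrix $D$ (equivalently $H = \bar D$) and $\mathcal{L}$ are nonsingular, hence $(\phi_h,\phi_\bot)^T$ is the \emph{unique} solution of the block system~(\ref{eq:linear_prog_block}) and therefore coincides with the right‑hand side of the representation in Theorem~\ref{thm:inverse_theorem}, with $g = \mathcal{L}\phi$.

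Reading the bottom row of that representation gives
\begin{align*}
\phi_\bot = S^{-1}{\mathcal A}_\bot^{-1}f'[\hat{u}]|_{V_h}T^{-1}{\mathcal A}_h^{-1}g + S^{-1}{\mathcal A}_\bot^{-1}g .
\end{align*}
Both summands are obtained by applying the linear map ${\mathcal A}_\bot^{-1}\colon H^{-1}\to V_\bot$ to an element of $H^{-1}$ — to $f'[\hat{u}]|_{V_h}T^{-1}{\mathcal A}_h^{-1}g \in H^{-1}$ in the first term and to $g$ itself in the second — so I can pull $S^{-1}{\mathcal A}_\bot^{-1}$ out on the left and collect what remains as a single operator acting on $g$:
\begin{align*}
\phi_\bot = S^{-1}{\mathcal A}_\bot^{-1}\bigl(I + f'[\hat{u}]|_{V_h}T^{-1}{\mathcal A}_h^{-1}\bigr)g .
\end{align*}
Finally I would substitute the definitions ${\mathcal A}_\bot^{-1} = (I-R_h){\mathcal A}^{-1}$ and ${\mathcal A}_h^{-1} = R_h{\mathcal A}^{-1}$, together with $g = \mathcal{L}\phi$, to obtain
\begin{align*}
\phi - R_h\phi = S^{-1}(I-R_h){\mathcal A}^{-1}\bigl(I + f'[\hat{u}]|_{V_h}T^{-1}R_h{\mathcal A}^{-1}\bigr)\mathcal{L}\phi ,
\end{align*}
which is exactly the asserted formula.

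There is no substantive obstacle in this argument; the only point deserving a moment's care is the bookkeeping of domains and codomains that legitimizes factoring ${\mathcal A}_\bot^{-1}$ to the left of both terms — this is valid precisely because $T^{-1}{\mathcal A}_h^{-1}g \in V_h$ and hence $f'[\hat{u}]|_{V_h}T^{-1}{\mathcal A}_h^{-1}g \in H^{-1}$, the space on which ${\mathcal A}_\bot^{-1}$ acts — and keeping in mind that the hypotheses of Theorem~\ref{thm:inverse_theorem} already ensure that $T^{-1}$, $S^{-1}$ and $\mathcal{L}^{-1}$ are all well defined, so that every operator appearing above is meaningful.
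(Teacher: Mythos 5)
Your proposal is correct and follows essentially the same route as the paper, which states only that the result ``is obtained immediately from $\phi_\bot$ in Theorem~\ref{thm:inverse_theorem}''; you have simply spelled out the factoring of $S^{-1}{\mathcal A}_\bot^{-1}$ and the substitutions ${\mathcal A}_\bot^{-1}=(I-R_h){\mathcal A}^{-1}$, ${\mathcal A}_h^{-1}=R_h{\mathcal A}^{-1}$, $g={\mathcal L}\phi$ that the paper leaves implicit.
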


\begin{proof}
The proof is obtained immediately from $\phi_\bot$ in Theorem \ref{thm:inverse_theorem}.

\qed
\end{proof}

Using Corollary \ref{cor:projection_error} as $g \in L^2(\Omega)$, it is easy to derive the constant $C$ satisfying $\| \phi - R_h \phi \|_{H^1_0} \le C \| {\mathcal L} \phi \|_{L^2}$ in \cite{nakao2008guaranteed}.

Similarly, we can derive the following proposition from Theorem \ref{thm:inverse_theorem2}.

\begin{corollary}[of Theorem \ref{thm:inverse_theorem2}]\label{cor:projection_error2}
Under the same assumptions as in Theorem \ref{thm:inverse_theorem2}, the Ritz projection error in the solution $\phi \in H^1_0(\Omega)$ of the linear noncoercive elliptic PDE (\ref{eq:linear_prog}) is
represented as follows:
{\small
\begin{align*}
& \phi - R_h \phi  \\
=& G^{-1} (I - R_h) {\mathcal A}^{-1} \left( I + f'[\hat{u}] |_{V_h} S_h^{-1} R_h {\mathcal A}^{-1} \left( I + f'[\hat{u}] |_{V_\bot} G^{-1} (I-R_h) {\mathcal A}^{-1} \right) \right) {\mathcal L}\phi.
\end{align*}}
\end{corollary}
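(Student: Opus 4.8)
The plan is to mirror the proof of Corollary~\ref{cor:projection_error}, but now starting from the alternative block form of $H$ established in Theorem~\ref{thm:inverse_theorem2}. Recall that $\phi - R_h\phi = \phi_\bot = (I - R_h)\phi$, so the only task is to read off the second component of the vector $(\phi_h,\phi_\bot)^T$ from the explicit $H$ given in Theorem~\ref{thm:inverse_theorem2} and then rewrite it in the ``operator acting on $\mathcal L\phi$'' form, using that $g = \mathcal L\phi$ and that $\mathcal A_h^{-1} g = R_h\mathcal A^{-1}\mathcal L\phi$, $\mathcal A_\bot^{-1} g = (I-R_h)\mathcal A^{-1}\mathcal L\phi$.

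First I would write out the bottom row of $H$ from Theorem~\ref{thm:inverse_theorem2}:
\begin{align*}
\phi_\bot
= G^{-1}\mathcal A_\bot^{-1} f'[\hat u]|_{V_h} S_h^{-1}\,\mathcal A_h^{-1} g
+ \Bigl( G^{-1} + G^{-1}\mathcal A_\bot^{-1} f'[\hat u]|_{V_h} S_h^{-1}\mathcal A_h^{-1} f'[\hat u]|_{V_\bot} G^{-1}\Bigr)\mathcal A_\bot^{-1} g .
\end{align*}
Then I would factor a $G^{-1}\mathcal A_\bot^{-1}$ (note $\mathcal A_\bot^{-1} = (I-R_h)\mathcal A^{-1}$) out on the left. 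The first term already carries $\mathcal A_\bot^{-1}$; for the second term I would write $G^{-1}\mathcal A_\bot^{-1} g = G^{-1}\mathcal A_\bot^{-1}(I)g$, so that after factoring $G^{-1}\mathcal A_\bot^{-1}$ on the left what remains inside is $I + f'[\hat u]|_{V_h} S_h^{-1}\mathcal A_h^{-1}\bigl( g + f'[\hat u]|_{V_\bot} G^{-1}\mathcal A_\bot^{-1} g\bigr)$. Writing $g = \mathcal L\phi$ and $\mathcal A_h^{-1} = R_h\mathcal A^{-1}$, $\mathcal A_\bot^{-1} = (I-R_h)\mathcal A^{-1}$ throughout, and pulling the common right factor $\mathcal L\phi$ to the end, yields exactly
\begin{align*}
\phi - R_h\phi
= G^{-1}(I-R_h)\mathcal A^{-1}\Bigl( I + f'[\hat u]|_{V_h} S_h^{-1} R_h\mathcal A^{-1}\bigl( I + f'[\hat u]|_{V_\bot} G^{-1}(I-R_h)\mathcal A^{-1}\bigr)\Bigr)\mathcal L\phi,
\end{align*}
which is the claimed identity.

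The computation is entirely routine bookkeeping once Theorem~\ref{thm:inverse_theorem2} is in hand; there is no real obstacle. The only point requiring a little care is the nesting of the parentheses when factoring: one must factor $G^{-1}(I-R_h)\mathcal A^{-1}$ on the outside, then recognize the inner occurrence of $f'[\hat u]|_{V_\bot} G^{-1}(I-R_h)\mathcal A^{-1}$ as the result of the same kind of factoring applied one level down, so that the two ``$I + \cdots$'' blocks appear telescoped rather than added. Because Theorem~\ref{thm:inverse_theorem2} already asserts the nonsingularity of $\mathcal L$, $D$, $S_h$ (under the hypothesis that $G$ is nonsingular), all the inverses written above are well defined and the manipulation is justified. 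Hence the proof reduces to the one-line remark that the formula is obtained immediately from the expression for $\phi_\bot$ in Theorem~\ref{thm:inverse_theorem2} after the above algebraic rearrangement.
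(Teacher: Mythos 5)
Your proposal is correct and matches the approach the paper takes (the paper itself only proves the analogous Corollary~\ref{cor:projection_error} with the one-line remark that it follows immediately from $\phi_\bot$ in the corresponding theorem, and leaves Corollary~\ref{cor:projection_error2} unproved). You simply read off the bottom row of $H$ from Theorem~\ref{thm:inverse_theorem2}, substitute $g = \mathcal{L}\phi$ with ${\mathcal A}_h^{-1} = R_h{\mathcal A}^{-1}$, ${\mathcal A}_\bot^{-1} = (I-R_h){\mathcal A}^{-1}$, and telescope the common left factor $G^{-1}(I-R_h){\mathcal A}^{-1}$ and the inner factor $f'[\hat u]|_{V_h}S_h^{-1}R_h{\mathcal A}^{-1}$ to collect the nested $(I + \cdots)$ blocks; the algebra checks out exactly.
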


\section{Verification method using a Kantorovich-type theorem based on Theorem \ref{thm:inverse_theorem}} \label{sec:NK-theorem}
In section \ref{sec:numeric_example}, we presented some actual examples of candidate sets (\ref{CandidateSet_wh}) and (\ref{CandidateSet_wbot}).
However, it has also been pointed out that the finite-dimensional candidate set (\ref{CandidateSet_wh}) has the drawback whereby the interval width increases because it involves interval arithmetic \cite{nakao2004efficient}.
This shortcoming is almost solved by our present method, because  the expression ${\mathcal G}$ defined by (\ref{def:Lipsitz}) only contains the term $(w_h + w_\bot)$ in quadratic or higher-order expressions.
However,  the proposed method has some disadvantages from the viewpoint of computational cost.
Therefore, we first introduce a Kantorovich-type theorem as an example in which the candidate set is (\ref{CandidateSet_IN}), and demonstrate the usefulness of applying Theorem \ref{thm:inverse_theorem} to this Kantorovich-type theorem.
Let $B(u, r) := \{ v \in H^1_0(\Omega) ~ | ~ \| u - v \|_{H^1_0} < r \}$ be an open ball and $\bar{B}(u, r) := \{ v \in H^1_0(\Omega) ~ | ~ \| u - v \|_{H^1_0} \le r \}$ be a closed ball.

\begin{theorem}[Kantorovich-type]\label{thm:Kantorovich}
Let $\hat{u} \in V_h \subset H^1_0(\Omega)$ be an approximate solution of (\ref{main_problem}). 
Assume that the Fr\'echet derivative ${\mathcal L}$ is nonsingular and satisfies
\begin{align}
\label{const:Kantoro_alpha}
\| {\mathcal L}^{-1} {\mathcal F}(\hat{u}) \|_{H^1_0} \le \beta
\end{align}
for a certain positive $\beta$,
and that the following holds for a certain positive $\omega$:
\begin{align}
\label{const:Kantoro_omega}
\| {\mathcal L}^{-1} \left( f'[\hat{u}] - f'[\hat{u} + v] \right) \|_{L(H^1_0)} \le \omega \| v \|_{H^1_0}, ~ v \in \bar{B}(0, 2\beta).
\end{align}
If $\beta \omega < 1/2$, then there exists a solution $u \in \bar{B}(\hat{u}, \rho)$ of (\ref{main_problem}) satisfying
\begin{align*}
\| u - \hat{u} \|_{H^1_0} \le \rho := \frac{ 1 - \sqrt{ 1 - 2 \beta \omega } }{ \omega }.
\end{align*}
Moreover, the solution is unique in $\bar{B}(0, 2\beta)$.
\end{theorem}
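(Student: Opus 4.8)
The plan is to recast the problem \eqref{main_problem} as the infinite-dimensional Newton fixed point equation \eqref{eq:IN_Method}, namely $w = -{\mathcal L}^{-1}{\mathcal F}(\hat u) + {\mathcal L}^{-1}{\mathcal G}(w)$ with $w = u - \hat u$, and to apply Banach's fixed point theorem to the map $P(w) := -{\mathcal L}^{-1}{\mathcal F}(\hat u) + {\mathcal L}^{-1}{\mathcal G}(w)$ on the closed ball $\bar B(0,\rho) \subset H^1_0(\Omega)$, where $\rho = (1-\sqrt{1-2\beta\omega})/\omega$. Since ${\mathcal L}$ is assumed nonsingular, a solution $w$ of this equation gives $u = \hat u + w$ solving \eqref{main_problem}, so it suffices to verify the self-mapping and contraction properties of $P$ on $\bar B(0,\rho)$. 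Note that under $\beta\omega < 1/2$ one has $\beta \le \rho \le 2\beta$ (the relevant root of $\tfrac{\omega}{2}\rho^2 - \rho + \beta = 0$), so assumption \eqref{const:Kantoro_omega}, stated on $\bar B(0,2\beta)$, is available at all points of $\bar B(0,\rho)$.

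The key analytic input is a Lipschitz-type bound on ${\mathcal G}$. First I would observe that by the definition \eqref{def:Lipsitz} and the mean value theorem in integral form,
\begin{align*}
{\mathcal L}^{-1}\bigl({\mathcal G}(v_1) - {\mathcal G}(v_2)\bigr)
= {\mathcal L}^{-1}\!\int_0^1 \bigl( f'[\hat u] - f'[\hat u + v_2 + t(v_1 - v_2)] \bigr)(v_1 - v_2)\, dt,
\end{align*}
so that \eqref{const:Kantoro_omega} yields $\|{\mathcal L}^{-1}({\mathcal G}(v_1) - {\mathcal G}(v_2))\|_{H^1_0} \le \tfrac{\omega}{2}(\|v_1\| + \|v_2\|)\|v_1 - v_2\|_{H^1_0}$ for $v_1, v_2 \in \bar B(0,2\beta)$, using that the intermediate points $v_2 + t(v_1-v_2)$ lie in the ball. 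In particular, taking $v_2 = 0$ and using ${\mathcal G}(0) = 0$ gives $\|{\mathcal L}^{-1}{\mathcal G}(v)\|_{H^1_0} \le \tfrac{\omega}{2}\|v\|_{H^1_0}^2$ for $v \in \bar B(0,2\beta)$.

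For the self-mapping property, for $w \in \bar B(0,\rho)$ I would estimate $\|P(w)\|_{H^1_0} \le \|{\mathcal L}^{-1}{\mathcal F}(\hat u)\|_{H^1_0} + \|{\mathcal L}^{-1}{\mathcal G}(w)\|_{H^1_0} \le \beta + \tfrac{\omega}{2}\rho^2 = \rho$, where the last equality is exactly the defining quadratic equation for $\rho$; hence $P$ maps $\bar B(0,\rho)$ into itself. For the contraction property, for $w_1, w_2 \in \bar B(0,\rho)$ the bound above gives $\|P(w_1) - P(w_2)\|_{H^1_0} \le \tfrac{\omega}{2}(\|w_1\| + \|w_2\|)\|w_1-w_2\| \le \omega\rho\,\|w_1 - w_2\|_{H^1_0}$, and $\omega\rho = 1 - \sqrt{1-2\beta\omega} < 1$, so $P$ is a contraction. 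Banach's fixed point theorem then gives a unique fixed point $w^* \in \bar B(0,\rho)$, and $u = \hat u + w^*$ is the asserted solution with $\|u - \hat u\|_{H^1_0} \le \rho$. For uniqueness in the larger ball $\bar B(0,2\beta)$, I would run the contraction estimate directly on $\bar B(0,2\beta)$: any fixed point $w$ there satisfies $\|w\|_{H^1_0} = \|P(w)\|_{H^1_0} \le \beta + \tfrac{\omega}{2}(2\beta)^2$, and combined with a contraction argument comparing two fixed points $w, \tilde w \in \bar B(0,2\beta)$, $\|w - \tilde w\| \le 2\beta\omega\,\|w - \tilde w\|$ — here one must note $2\beta\omega < 1$ — forces $w = \tilde w$. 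The main obstacle, modest as it is, is the bookkeeping around which ball the hypotheses hold on: \eqref{const:Kantoro_omega} is only assumed on $\bar B(0,2\beta)$, so one must check at every step that the arguments and all convex combinations of arguments stay inside that ball, which reduces to the elementary inequalities $\rho \le 2\beta$ and $2\beta\omega < 1$ following from $\beta\omega < 1/2$.
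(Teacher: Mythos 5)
Your proof is correct, and it is the standard Banach fixed point argument that the theorem's statement implicitly relies on; the paper itself states this Kantorovich-type result without proof (it is presented as known, to be applied with the constants $\beta$, $\omega$ computed via Theorem~\ref{thm:inverse_theorem}). The key steps all check out: the integral mean-value identity for ${\mathcal G}(v_1)-{\mathcal G}(v_2)$ together with hypothesis \eqref{const:Kantoro_omega} gives the Lipschitz bound with factor $\tfrac{\omega}{2}(\|v_1\|+\|v_2\|)$, valid because $\bar B(0,2\beta)$ is convex; the inequalities $\beta\le\rho\le 2\beta$ follow from $\beta\omega<1/2$ and justify applying the hypothesis on $\bar B(0,\rho)$; the quadratic identity $\beta+\tfrac{\omega}{2}\rho^2=\rho$ gives self-mapping; $\omega\rho=1-\sqrt{1-2\beta\omega}<1$ gives contraction; and the separate contraction estimate on $\bar B(0,2\beta)$ with factor $2\beta\omega<1$ establishes the claimed uniqueness there. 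One minor remark: the paper writes the fixed point form as $w=-{\mathcal L}^{-1}{\mathcal F}(\hat u)+{\mathcal L}^{-1}{\mathcal G}(w)$, whereas a direct computation from \eqref{def:Residual}--\eqref{def:Lipsitz} gives $w=-{\mathcal L}^{-1}{\mathcal F}(\hat u)-{\mathcal L}^{-1}{\mathcal G}(w)$; this sign does not affect any of your norm estimates or the conclusion, but it is worth being aware of when implementing.
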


When we use Kantorovich-type theorems in computer-assisted existence proofs, it is necessary to determine the constants $\beta$ and $\omega$.
In existing methods (e.g., \cite{plum1994enclosures, plum2009computer, takayasu2013verified}), these constants are determined as follows:
\begin{align*}
\| {\mathcal L}^{-1} {\mathcal F}(\hat{u}) \|_{H^1_0} \le \| {\mathcal L}^{-1} \|_{L(H^{-1}, H^1_0)} \| {\mathcal F}(\hat{u}) \|_{H^{-1}} \le K \delta =: \beta,
\end{align*}
where $\| {\mathcal L}^{-1} \|_{L(H^{-1}, H^1_0)} \| \leq K, \| {\mathcal F}(\hat{u}) \|_{H^{-1}} \leq \delta$,\\
and
\begin{align*}
\| {\mathcal L}^{-1} \left( f'[\hat{u}] - f'[\hat{u} + v] \right) \|_{L(H^1_0)} \le & \| {\mathcal L}^{-1} \|_{L(H^{-1}, H^1_0)} \|  f'[\hat{u}] - f'[\hat{u} + v] \|_{L(H^1_0, H^{-1})} \\
\le & K G \| v \|_{H^1_0}, ~ v \in \bar{B}(0, 2\beta),
\end{align*}
where we have assumed that $\|  f'[\hat{u}] - f'[\hat{u} + v] \|_{L(H^1_0, H^{-1})} \leq G\| v \|_{H^1_0}$, 
respectively.
As noted in Remark \ref{remark:relation_Norm_Theorem}, the estimation using Theorem \ref{thm:inverse_theorem} gives better results than that via $\| {\mathcal L}^{-1} \|_{L(H^{-1}, H^1_0)}$.

\begin{corollary}[of Theorem \ref{thm:inverse_theorem}] \label{cor:NK_alpha}
Under the same assumptions as in Theorems \ref{thm:inverse_theorem} and \ref{thm:Kantorovich},
if $\hat{u}$ is a solution that satisfies (\ref{eq:approx_uh}), we can estimate
\begin{align*}
\| {\mathcal L}^{-1} {\mathcal F}(\hat{u}) \|_{H^1_0} = \left\| \left( \begin{array}{c}
\| T^{-1} R_h{\mathcal A}^{-1} f'[\hat{u}]|_{V_{\bot}}S^{-1} ( I - R_h) {\mathcal A}^{-1} {\mathcal F}(\hat{u}) \|_{H^1_0} \\
\| S^{-1} (I - R_h) {\mathcal A}^{-1} {\mathcal F}(\hat{u}) \|_{H^1_0}
\end{array} \right) \right\|_{E}.
\end{align*}
\end{corollary}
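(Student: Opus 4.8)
The plan is to apply Theorem~\ref{thm:inverse_theorem} directly to the residual $g = {\mathcal F}(\hat{u})$, exploiting the simplification that occurs because $\hat{u}$ satisfies the discrete equation \eqref{eq:approx_uh}. First I would recall that \eqref{eq:approx_uh} says precisely that the finite-dimensional component of ${\mathcal A}^{-1}{\mathcal F}(\hat{u})$ vanishes: indeed, ${\mathcal F}(\hat{u}) = {\mathcal A}\hat{u} - f(\hat{u})$, and for every $v_h \in V_h$ we have $({\mathcal A}^{-1}{\mathcal F}(\hat{u}), v_h)_{H^1_0} = (\nabla\hat{u},\nabla v_h)_{L^2} - (f(\hat{u}),v_h)_{L^2} = 0$, so $R_h{\mathcal A}^{-1}{\mathcal F}(\hat{u}) = {\mathcal A}_h^{-1}{\mathcal F}(\hat{u}) = 0$. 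Thus in the representation of $(\phi_h,\phi_\bot)^T$ from Theorem~\ref{thm:inverse_theorem}, the input vector is $(0, {\mathcal A}_\bot^{-1}{\mathcal F}(\hat{u}))^T$.

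Next I would substitute this input into the $2\times2$ operator matrix $H$ given explicitly in Theorem~\ref{thm:inverse_theorem}. Multiplying $H$ against $(0,\, (I-R_h){\mathcal A}^{-1}{\mathcal F}(\hat{u}))^T$ kills the first column of $H$, leaving only the second column acting on ${\mathcal A}_\bot^{-1}{\mathcal F}(\hat{u})$. The $(1,2)$ entry of $H$ is $T^{-1}{\mathcal A}_h^{-1}f'[\hat{u}]|_{V_\bot}S^{-1}$ and the $(2,2)$ entry is $S^{-1}$, so $\phi_h = T^{-1}{\mathcal A}_h^{-1}f'[\hat{u}]|_{V_\bot}S^{-1}(I-R_h){\mathcal A}^{-1}{\mathcal F}(\hat{u})$ and $\phi_\bot = S^{-1}(I-R_h){\mathcal A}^{-1}{\mathcal F}(\hat{u})$, where $\phi = {\mathcal L}^{-1}{\mathcal F}(\hat{u})$. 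Then, using the same direct-product norm convention as in Corollary~\ref{cor:inverse_operator}, namely $\|\phi\|_{H^1_0} = \sqrt{\|R_h\phi\|_{H^1_0}^2 + \|(I-R_h)\phi\|_{H^1_0}^2} = \|(\|\phi_h\|_{H^1_0},\|\phi_\bot\|_{H^1_0})^T\|_E$ (which holds because $R_h$ is the $H^1_0$-orthogonal projection, so $V_h \perp V_\bot$ in $H^1_0$), I would assemble the two component norms into the claimed Euclidean-norm expression, recalling ${\mathcal A}_h^{-1} = R_h{\mathcal A}^{-1}$.

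The argument is essentially a bookkeeping exercise once the two ingredients are in place, so there is no deep obstacle; the main point to get right is the vanishing of $R_h{\mathcal A}^{-1}{\mathcal F}(\hat{u})$, which is exactly the content of hypothesis \eqref{eq:approx_uh}, together with the Pythagorean splitting of the $H^1_0$-norm across the orthogonal decomposition $H^1_0(\Omega) = V_h \oplus V_\bot$. One should also note that Theorem~\ref{thm:inverse_theorem} already guarantees ${\mathcal L}$ is nonsingular under its stated assumptions, so the invocation of ${\mathcal L}^{-1}$ is legitimate; no additional invertibility needs to be checked here. The only subtlety worth flagging is that this is an identity, not merely an inequality — in contrast to Corollary~\ref{cor:inverse_operator}, no operator-norm submultiplicativity is used in deriving the displayed equation, which is precisely why, as explained in Remark~\ref{remark:relation_Norm_Theorem}, the estimate obtained this way improves on the naive bound $\|{\mathcal L}^{-1}{\mathcal F}(\hat{u})\|_{H^1_0} \le \|{\mathcal L}^{-1}\|_{L(H^{-1},H^1_0)}\|{\mathcal F}(\hat{u})\|_{H^{-1}}$.
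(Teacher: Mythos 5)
Your proof is correct and matches the intended argument exactly: the hypothesis \eqref{eq:approx_uh} makes $R_h{\mathcal A}^{-1}{\mathcal F}(\hat{u}) = {\mathcal A}_h^{-1}{\mathcal F}(\hat{u})$ vanish (as already noted in Remark~\ref{remark:relation_Norm_Theorem}), so plugging $g = {\mathcal F}(\hat{u})$ into the explicit formula of Theorem~\ref{thm:inverse_theorem} keeps only the second column of $H$, and the Pythagorean splitting across $V_h \perp V_\bot$ turns the component expressions into the stated Euclidean-norm identity. The paper omits a proof of this corollary, but the one-line derivation you give is precisely the bookkeeping it relies on.
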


\begin{corollary}[of Theorem \ref{thm:inverse_theorem}] \label{cor:NK_omega}
Under the same assumptions as in Theorems \ref{thm:inverse_theorem} and \ref{thm:Kantorovich}, we can estimate
\begin{align*}
& \| {\mathcal L}^{-1} \left( f'[\hat{u}] - f'[\hat{u} + v] \right) \|_{L(H^1_0)} \\
= & \sup_{\phi \in H^1_0(\Omega)} \frac{ \left\| H \left( \begin{array}{c}
R_h {\mathcal A}^{-1}( f'[\hat{u}] - f'[\hat{u} + v] ) \phi \\
(I - R_h) {\mathcal A}^{-1}( f'[\hat{u}] - f'[\hat{u} + v] ) \phi
\end{array} \right) \right\|_{V_h \times V_\bot } }{ \| \phi \|_{H^1_0} },
\end{align*}
where $v \in \bar{B}(0, 2\beta)$.
\end{corollary}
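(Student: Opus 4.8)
The plan is to obtain the identity directly from Theorem~\ref{thm:inverse_theorem}, once one notices that it is merely the operator-norm reformulation of the pointwise representation of $\mathcal{L}^{-1}$, and that the $H^1_0$-norm splits \emph{orthogonally} along the Ritz decomposition. First I would record the following consequence of Theorem~\ref{thm:inverse_theorem}: for every $g \in H^{-1}(\Omega)$, writing $\psi := \mathcal{L}^{-1} g$, $\psi_h := R_h \psi$, $\psi_\bot := (I-R_h)\psi$, the theorem gives $(\psi_h,\psi_\bot)^T = H\big(R_h \mathcal{A}^{-1} g,\,(I-R_h)\mathcal{A}^{-1} g\big)^T$. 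Since $R_h$ is the orthogonal projection onto $V_h$ with respect to $(\cdot,\cdot)_{H^1_0}$, the subspaces $V_h$ and $V_\bot$ are mutually $H^1_0$-orthogonal, hence $\|\psi\|_{H^1_0}^2 = \|\psi_h\|_{H^1_0}^2 + \|\psi_\bot\|_{H^1_0}^2$, which is exactly $\|(\psi_h,\psi_\bot)^T\|_{V_h\times V_\bot}^2$ for the product norm used in Corollary~\ref{cor:inverse_operator}. Therefore
\[
\|\mathcal{L}^{-1} g\|_{H^1_0} = \left\| H \left( \begin{array}{c} R_h \mathcal{A}^{-1} g \\ (I-R_h)\mathcal{A}^{-1} g \end{array} \right) \right\|_{V_h\times V_\bot}, \qquad \forall g \in H^{-1}(\Omega).
\]

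Next I would apply this equality with $g = \big(f'[\hat{u}] - f'[\hat{u}+v]\big)\phi$, which belongs to $H^{-1}(\Omega)$ for every $\phi \in H^1_0(\Omega)$ because $f'[\hat{u}] - f'[\hat{u}+v] \in L(H^1_0, H^{-1})$ (this quantity is finite on $\bar{B}(0,2\beta)$ by the hypothesis of Theorem~\ref{thm:Kantorovich}). This yields, for each fixed $\phi$,
\[
\left\| \mathcal{L}^{-1}\big(f'[\hat{u}] - f'[\hat{u}+v]\big)\phi \right\|_{H^1_0} = \left\| H \left( \begin{array}{c} R_h \mathcal{A}^{-1}(f'[\hat{u}] - f'[\hat{u}+v])\phi \\ (I-R_h)\mathcal{A}^{-1}(f'[\hat{u}] - f'[\hat{u}+v])\phi \end{array} \right) \right\|_{V_h\times V_\bot}.
\]
Dividing by $\|\phi\|_{H^1_0}$ and taking the supremum over $\phi \in H^1_0(\Omega)\setminus\{0\}$ turns the left-hand side into $\|\mathcal{L}^{-1}(f'[\hat{u}] - f'[\hat{u}+v])\|_{L(H^1_0)}$ by the definition of the operator norm, and the right-hand side into the expression claimed in the corollary, which completes the argument.

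I do not expect a genuine obstacle here; the only point requiring a moment's care is that the norm split along the Ritz decomposition is an \emph{equality} (Pythagoras), not merely the triangle-type inequality used in the proof of Corollary~\ref{cor:inverse_operator}. This is precisely why the present formula is sharper than the classical estimate $\|\mathcal{L}^{-1}(f'[\hat{u}] - f'[\hat{u}+v])\|_{L(H^1_0)} \le \|\mathcal{L}^{-1}\|_{L(H^{-1},H^1_0)}\,\|f'[\hat{u}] - f'[\hat{u}+v]\|_{L(H^1_0,H^{-1})}$: inserting the concrete right-hand side into $H$ and only afterwards passing to the supremum avoids discarding the cancellations between the two factors, in the same spirit as Remark~\ref{remark:relation_Norm_Theorem}.
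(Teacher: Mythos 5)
Your proof is correct and uses exactly the mechanism the paper intends (the corollary is stated without an explicit proof, but the identity is a direct consequence of the representation in Theorem~\ref{thm:inverse_theorem}). The decisive observation, which you make correctly, is that the Ritz projection is $H^1_0$-orthogonal, so $\|\psi\|_{H^1_0}^2 = \|R_h\psi\|_{H^1_0}^2 + \|(I-R_h)\psi\|_{H^1_0}^2$ is an equality (Pythagoras), and hence $\|\mathcal{L}^{-1}g\|_{H^1_0}$ equals, not merely bounds, the $V_h\times V_\bot$ norm of $H$ applied to $(R_h\mathcal{A}^{-1}g,\,(I-R_h)\mathcal{A}^{-1}g)^T$; specializing to $g=(f'[\hat{u}]-f'[\hat{u}+v])\phi$ and taking the supremum over $\phi\neq 0$ gives the claimed identity for the operator norm.
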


Corollary \ref{cor:NK_alpha} clearly gives the desired estimation with higher accuracy than the existing method.
However, because Kantorovich-type theorems use the candidate set (\ref{CandidateSet_IN}) instead of (\ref{CandidateSet_wh}) and (\ref{CandidateSet_wbot}),  Corollary \ref{cor:NK_omega} is not necessarily advantageous compared with the existing methods.
For example, in the case of $f(u) = u^3$, $\hat{u}$ may be superior to the existing method because $f'[\hat{u}] - f'[\hat{u} + v] = -6\hat{u}v - 3v^2$.


\section*{Acknowledgements}
This work is supported by JST CREST Grant Number JPMJCR14D4, and by MEXT under the ``Exploratory Issue on Post-K computer'' project (Development of verified numerical computations and super high-performance computing environment for extreme research).
The second author is supported by JSPS KAKENHI Grant Number 18K03434.

\bibliographystyle{amsplain} 
\bibliography{ref.bib}

\end{document}